\newtheorem{prop}{Proposition}
\title{Random Thue and Fermat equations}
\date{}
\subjclass[2010]{11D45 (Primary);
11D41, 11E76, 11G35, 11G50, 14G05 (Secondary)}
\keywords{Thue equations; Fermat equations; Random Diophantine equations; Hasse principle;
$abc$-conjecture}
\author{Rainer Dietmann}
\address{Department of Mathematics, Royal Holloway, University of London,
Egham TW20 0EX, UK}
\email{Rainer.Dietmann@rhul.ac.uk}
\author{Oscar Marmon}
\address{Mathematisches Institut \\ Georg-August-Universität Göttingen \\ Bunsenstr.~ 3-5 \\ 37073 Göttingen \\ Germany}
\email{omarmon@uni-math.gwdg.de}
\begin{document}

\renewcommand{\bf}{\mathbf}
\newtheorem{thm}{Theorem}
\newtheorem{lemma}{Lemma}
\theoremstyle{remark}
\newtheorem*{rem*}{Remark}
\newcommand{\epsi}{\varepsilon}
\newcommand{\ZZ}{\mathbb{Z}}
\newcommand{\RR}{\mathbb{R}}
\newcommand{\NN}{\mathbb{N}}
\newcommand{\CC}{\mathbb{C}}
\newcommand{\xx}{\mathbf{x}}
\newcommand{\yy}{\mathbf{y}}
\newcommand{\cN}{\mathcal{N}}

\newcommand{\en}{\mathbb{N}}
\newcommand{\qu}{\mathbb{Q}}
\newcommand{\zet}{\mathbb{Z}}
\newcommand{\ce}{\mathbb{C}}
\newcommand{\er}{\mathbb{R}}
\newtheorem{theorem}{Theorem}
\newtheorem{corollary}{Corollary}

\begin{abstract}
We consider Thue equations of the form $ax^k+by^k = 1$, and
assuming the truth of the $abc$-conjecture, we show that almost all locally
soluble Thue equations of degree at least three violate the Hasse
principle. A similar conclusion holds true for Fermat equations
$ax^k+by^k+cz^k = 0$ of degree at least six.
\end{abstract}

\maketitle
\section{Introduction}
Let $F(X_1, \ldots, X_s) \in \zet[X_1, \ldots, X_s]$.
If $F(x_1, \ldots, x_s)=0$ for some $\mathbf{x} \in \zet^s$, then
trivially the equation $F(x_1, \ldots, x_s)=0$ also has solutions
over $\er$ and over all local rings $\zet_p$. If the opposite is
true as well, then we say that $F$ satisfies the \emph{Hasse principle}.
For homogeneous polynomials $F$, always $F(0, \ldots, 0)=0$, so one then
naturally asks for non-trivial solutions. The Hasse principle for
example holds true for quadratic forms, but fails for cubic forms:
one famous counterexample (see \cite{Selmer}) is given
by the cubic form
\begin{equation}
\label{cub345}
  F(X_1, X_2, X_3) = 3X_1^3 + 4X_2^3 + 5X_3^3.
\end{equation}
In recent years questions about the frequency of such failures of the
Hasse principle were addressed for different classes of Diophantine
equations (see for example \cite{Bhargava_1308.0395},
\cite{Bhargava_1402.1131}, \cite{Browning-Dietmann09},
\cite{Bruedern_Dietmann_14},
\cite{BDLB_14}, \cite{Poonen_Voloch}).
For hyperelliptic curves, Bhargava \cite{Bhargava_1308.0395} has
recently shown that asymptotically, as their genus tends to infinity,
their probability to satisfy the Hasse principle, given that there
are local solutions, tends to zero.
In this note we focus on curves as well, namely those that are given
by Fermat equations such as \eqref{cub345}, or given by Thue equations.
This way we provide families of curves which satisfy the Hasse principle
with probability zero, and already for fixed small degree rather than
asymptotically for the degree tending to infinity,
but on the other hand our results,
like those in a related earlier paper \cite{Granville},
are conditional on the
\emph{$abc$-conjecture} (see \cite{Masser}), which we briefly recall:
if $a+b=c$ with $a,b,c \in \zet$ where $(a,b,c)=1$, and
\[
  P = \prod_{p \, | \, abc} p,
\]
the product taken over all primes $p$ dividing $abc$, then
for all $\varepsilon>0$ we have
\[
  \max\{|a|, |b|, |c|\} \ll_\varepsilon P^{1+\varepsilon}.
\]
Assuming the $abc$-conjecture, we are able to show that a `random'
Thue equation of degree at least three
has an integer solution with probability $0$, even if
it is locally soluble everywhere.
To be more precise, let
\begin{align*}
  N_{k, \operatorname{loc}}(H)  = \# & \{a, b \in \zet: 0<|a|,|b| \le H \text{ and }
  ax^k+by^k=1\\
  & \text{has solutions over all local rings $\zet_p$ and over $\er$}\}\\
\end{align*}
and
\begin{align*}
  N_{k, \operatorname{glob}}(H) = \# & \{a, b \in \zet: 0<|a|,|b| \le H \text{ and }
  ax^k+by^k=1\\
  & \text{has a solution $(x, y) \in \zet^2$}\}.\\
\end{align*}
We can now state our main result on random Thue equations.
\begin{thm}
\label{punta_arenas}
Let $k \ge 3$, and assume the truth of the $abc$-conjecture.
Then
\[
  \frac{N_{k, \operatorname{glob}}(H)}{N_{k, \operatorname{loc}}(H)} \rightarrow 0
  \quad (H \rightarrow \infty).
\]
\end{thm}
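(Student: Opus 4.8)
The plan is to compare the trivial inequality $N_{k,\operatorname{glob}}(H)\le N_{k,\operatorname{loc}}(H)$ with a lower bound $N_{k,\operatorname{loc}}(H)\gg H^2$ and an upper bound $N_{k,\operatorname{glob}}(H)=o(H^2)$; together these force the ratio to $0$. For the lower bound I would show that a positive proportion of pairs $(a,b)$ with $0<|a|,|b|\le H$ give everywhere locally soluble equations: for primes $p\nmid kab$ the reduction of $ax^k+by^k=1$ is a smooth plane curve with $\gg p$ points over $\mathbb{F}_p$ by the Weil bounds, hence a smooth point that lifts by Hensel's lemma, while the archimedean place and the finitely many primes dividing $k$ impose only sign and congruence conditions met by a positive density of $(a,b)$, and the primes dividing $ab$ can be accommodated by restricting $a,b$ to suitable residue classes. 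Thus $N_{k,\operatorname{loc}}(H)\asymp H^2$, and everything reduces to proving $N_{k,\operatorname{glob}}(H)=o(H^2)$.

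To bound $N_{k,\operatorname{glob}}(H)$ I would count the quadruples $(a,b,x,y)$ with $0<|a|,|b|\le H$ and $ax^k+by^k=1$, since every admissible $(a,b)$ contributes at least one such quadruple. The key input is the $abc$-conjecture applied to the decomposition $ax^k+by^k=1$: any common divisor of $ax^k$ and $by^k$ divides $1$, so $\gcd(ax^k,by^k)=1$, and with $P=\operatorname{rad}(abxy)\le|a||b||x||y|\le H^2\max(|x|,|y|)^2$ the conjecture yields $\max(|x|,|y|)^k\le\max(|ax^k|,|by^k|)\ll_\varepsilon(H^2\max(|x|,|y|)^2)^{1+\varepsilon}$, whence
\[
  \max(|x|,|y|)\ll_\varepsilon H^{2/(k-2)+\varepsilon}=:X .
\]
For $k\ge3$ the solutions $(x,y)$ therefore lie in a bounded box, which turns the count into a finite problem.

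Next I would split according to the size of $m:=\max(|x|,|y|)$. For each coprime pair $(x,y)$ the solutions $(a,b)$ of $ax^k+by^k=1$ lie on a line whose lattice points are spaced $\|(x^k,y^k)\|\asymp m^k$ apart, so at most $2\sqrt2\,H/m^k+1$ of them fall in $[-H,H]^2$. Summing the main term $H/m^k$ against the $O(m)$ coprime pairs with $\max=m$ gives $\sum_m O(H/m^{k-1})=O(H)$, which is negligible; in particular the trivial solutions with $m\le1$ and the whole range $m\le(2H)^{1/k}$ contribute only $O(H)$. For $k\ge5$ the remaining ``large'' range $(2H)^{1/k}<m\le X$ is already harmless, since there $X=o(H)$ and even the crude estimate $\sum_{(x,y)}1=O(X^2)=o(H^2)$ for the surviving $+1$ terms suffices.

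The main obstacle is the large range for $k=3$ and $k=4$, where $X\ge H$ and the crude $O(X^2)$ bound on the $+1$ terms exceeds $H^2$. Here the $+1$ is genuine only when the line $ax^k+by^k=1$ actually meets a lattice point of $[-H,H]^2$, that is (taking $|x|\ge|y|$) when the normalised residue of $x^{-k}$ modulo $y^k$ has absolute value at most $H(|y|/|x|)^k$; heuristically this occurs for a proportion $\asymp H/m^k$ of the pairs, which would again sum to $O(H^{2/k})=o(H^2)$. Making this rigorous is precisely the hard part: it amounts to an equidistribution estimate for $x^{-k}\bmod y^k$, equivalently a nontrivial upper bound for the number of integral points of bounded height on the affine curves $ax^k+by^k=1$ that beats the elementary divisor and congruence bounds (which stall at $k\ge5$). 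I would attack it either through the determinant method of Heath-Brown, exploiting that the solutions lie on curves of positive genus and are hence sparse even when both boxes are large, or through a large-sieve bound for the relevant modular $k$-th-power congruences; securing this $o(H^2)$ saving for $k=3,4$ is where the real work lies.
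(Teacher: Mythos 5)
Your overall architecture (trivial inequality, lower bound for $N_{k,\operatorname{loc}}$, upper bound for $N_{k,\operatorname{glob}}$) matches the paper's, but both halves contain genuine gaps. The first gap is that your claimed lower bound $N_{k,\operatorname{loc}}(H)\asymp H^2$ is not merely unproven but false. The problematic places are exactly the primes dividing $ab$, which you propose to handle by ``restricting $a,b$ to suitable residue classes'': this cannot work, because the relevant moduli are the prime factors of $ab$ themselves and so vary with the pair. Concretely, if $p\mid b$, $p\nmid ka$ and $p\equiv 1\pmod k$, then any solution in $\ZZ_p$ reduces modulo $p$ to $ax^k\equiv 1\pmod p$, forcing $a$ to be a $k$-th power residue modulo $p$; averaged over $(a,b)$ these conditions cut the count by a power of $\log H$. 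This is precisely the phenomenon behind the large sieve upper bound quoted in the paper and behind the Hooley--Guo asymptotic $M_{2,\operatorname{loc}}(H)\asymp H^3/(\log H)^{3/2}$. The paper's Lemma \ref{chile} accordingly claims only $N_{k,\operatorname{loc}}(H)\gg (H/\log H)^2$, proved by taking $a=q$, $b=-r$ with $q,r$ primes in prescribed residue classes (Siegel--Walfisz), so that quadratic reciprocity and Hensel's lemma give solubility everywhere. A consequence for your plan: since the true lower bound carries logarithmic losses, an upper bound $N_{k,\operatorname{glob}}(H)=o(H^2)$ no longer suffices; you need $o\bigl(H^2/(\log H)^2\bigr)$, i.e.\ in practice a power saving.

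The second gap is in the global count. Your application of the $abc$-conjecture is lossy: from $|by^k|\ll |abxy|^{1+\varepsilon}$ you should divide through by $|b|$, which yields $\max\{|x|,|y|\}\ll H^{1/(k-2)+\varepsilon}$ rather than your $H^{2/(k-2)+\varepsilon}$. With the sharper bound, the elementary line/lattice count you describe already gives $N_{k,\operatorname{glob}}(H)\ll H^{1+\varepsilon}$ for every $k\ge 4$, so only $k=3$ remains; with your weaker bound, $k=4$ is also out of reach, as you note. Finally, the case $k=3$, which you defer to ``the determinant method or a large sieve bound'' as ``where the real work lies,'' is not a detail but the main technical content of the paper: it is handled by Proposition \ref{prop:determinant} (Heath-Brown's approximate determinant method, from the authors' earlier work), a dyadic decomposition, and an optimization yielding $N_{3,\operatorname{glob}}(H)\ll H^{47/27+\varepsilon}$. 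As it stands, your proposal proves the theorem for no value of $k$ (the lower-bound half fails for all $k$), and even granting the paper's Lemma \ref{chile} in place of your lower bound, it leaves $k=3$ and $k=4$ unresolved.
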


In particular, assuming the $abc$-conjecture,
for any fixed degree at least three
there are infinitely many Thue equations
violating the Hasse principle, and a `random' Thue
equation of degree at least three
that is locally soluble everywhere has an integer solution
with probability $0$. Theorem \ref{punta_arenas} 
follows immediately from Lemma \ref{chile} and Lemma \ref{sonnig},
whose proofs will be
given in sections \ref{local} and \ref{sec:Thue2}, respectively.
Our strategy roughly follows that laid out in \cite{Bruedern_Dietmann_14},
reversing the roles of linear variables and $k$-th powers when dealing
with equations on average, though the details are simpler here. With
a little bit more work also more general Thue equations of the form
$ax^k+by^k=c$ should be doable, though we refrained from doing so and
concentrated on the special case $c=1$ in order to keep the exposition
simple.

In a similar way one can establish results for homogenized Thue equations,
i.e. Fermat equations. Let
\begin{align*}
  M_{k, \operatorname{loc}}(H) =
  \# & \{a, b, c \in \zet: 0<|a|,|b|,|c| \le H \text{ and }
  ax^k+by^k+cz^k=0\\
  & \text{has non-trivial
  solutions over all local rings $\zet_p$ and over $\er$}\}\\
\end{align*}
and
\begin{align*}
  M_{k, \operatorname{glob}}(H) =
  \# & \{a, b, c \in \zet: 0<|a|,|b|, |c| \le H \text{ and }
  ax^k+by^k+cz^k=0\\
  & \text{has a solution $(x, y, z) \in \zet^3 \backslash
  \{\mathbf{0}\}$}\}.\\
\end{align*}
The following result is a homogeneous analogue of Theorem
\ref{punta_arenas}.
\begin{thm}
\label{thm:bonn}
Let $k \ge 6$, and assume the truth of the $abc$-conjecture.
Then
\[
  \frac{M_{k, \operatorname{glob}}(H)}{M_{k, \operatorname{loc}}(H)} \rightarrow 0
  \quad (H \rightarrow \infty).
\]
\end{thm}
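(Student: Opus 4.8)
The plan is to deduce Theorem~\ref{thm:bonn} from two estimates playing the roles of Lemmas~\ref{chile} and~\ref{sonnig} in the Thue case: a lower bound $M_{k,\operatorname{loc}}(H)\gg H^3/(\log H)^{c}$ for a suitable constant $c=c(k)$, together with an upper bound $M_{k,\operatorname{glob}}(H)=o\bigl(H^3/(\log H)^{c}\bigr)$. Since there are $\asymp H^3$ triples $(a,b,c)$ with $0<|a|,|b|,|c|\le H$ altogether, these say that a positive proportion (up to a power of $\log H$) of triples is everywhere locally soluble while only a negligible proportion is globally soluble, and dividing gives the ratio tending to $0$.

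For the local lower bound I would argue exactly as in Lemma~\ref{chile}. Real solubility is only a sign condition on $(a,b,c)$ (and is automatic when $k$ is odd), so it holds for a positive proportion. For a prime $p\nmid kabc$ the reduction of $ax^k+by^k+cz^k=0$ is a smooth plane curve over $\mathbb{F}_p$, which by the Weil bound carries a smooth $\mathbb{F}_p$-point for all but finitely many $p$; Hensel's lemma lifts it to a $\zet_p$-point. The finitely many remaining primes, together with $p\mid k$, impose congruence conditions satisfied on a set of positive density, at the cost of at most a factor $(\log H)^{-c}$. This yields $M_{k,\operatorname{loc}}(H)\gg H^3/(\log H)^{c}$.

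The substance is the global upper bound, obtained by summing over solution vectors, reversing the roles of coefficients and $k$-th powers. A nontrivial solution may be taken primitive, $\gcd(x,y,z)=1$. Solutions with a vanishing coordinate, say $z=0$, force $ax^k=-by^k$ and hence pin $(a,b)$ to a thin set while leaving $c$ free, contributing only $\ll H^2$ triples. For solutions with $xyz\neq 0$ the admissible coefficient vectors lie on the plane $x^ka+y^kb+z^kc=0$, whose normal $(x^k,y^k,z^k)$ is primitive; writing $P=\max(|x|,|y|,|z|)$, the number of its integer points in $[-H,H]^3$ is $\ll H^2/P^k$ plus boundary terms controlled by the successive minima of the orthogonal lattice. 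Applying the $abc$-conjecture to the coprime relation extracted from $ax^k+by^k+cz^k=0$ after removing the common factor shows that every solution vector satisfies $P\ll_\epsi H^{3/(k-3)+\epsi}$, so only boundedly many scales $P$ occur. Summed over the $\asymp P^2$ vectors at each scale, the main term $H^2/P^k$ contributes $\ll H^2\sum_P P^{2-k}\ll H^2$, the sum converging already for $k\ge 4$.

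The main obstacle, and the source of the hypothesis $k\ge 6$, is the contribution of solution vectors of large height, where $P^k>H^2$ and the boundary terms dominate. At these scales a generic plane has shortest orthogonal vector of length $\asymp P^{k/2}\gg H$ and so carries no coefficient vector in the box; a plane contributes only when its orthogonal lattice contains a short vector $\mathbf v$, that is, only when $(x,y,z)$ lies on a Fermat curve $v_1X^k+v_2Y^k+v_3Z^k=0$ with small coefficients. The crux is therefore to show that the solution vectors carrying a box point form a thin set, of size $\ll_\epsi H^{6/(k-3)+\epsi}$, which I expect to establish through the geometry of numbers combined with uniform upper bounds for the number of points of bounded height on the curves $C_{\mathbf v}$, the bookkeeping of the common divisor in the $abc$-step being part of this work. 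Granting the thin-set estimate, the large-height contribution is $\ll_\epsi H^{6/(k-3)+\epsi}$, which is $o(H^3)$ precisely when $k\ge 6$; together with the $\ll H^2$ bounds above this gives $M_{k,\operatorname{glob}}(H)=o\bigl(H^3/(\log H)^{c}\bigr)$ and hence Theorem~\ref{thm:bonn}.
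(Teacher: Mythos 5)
Your overall strategy (a local lower bound of order $H^3$ up to logarithms, plus a global upper bound of order $H^{2+\epsi}$ obtained by fixing the solution vector and counting coefficient triples on the plane $ax^k+by^k+cz^k=0$) is the same as the paper's, but the step you yourself call the crux --- the ``thin set'' estimate for solution vectors of large height --- is left unproven, and it is precisely where the difficulty lies. You are forced into that unproven step because your application of the $abc$-conjecture is lossy: bounding the radical by $H^3|xyz|$ gives only $P\ll H^{3/(k-3)+\epsi}$, hence $\ll H^{9/(k-3)+\epsi}$ candidate vectors, which for $k=6$ is $H^{3+\epsi}$ and useless. The paper keeps the coefficient on both sides of the inequality instead: writing $u=ax^k$, $v=by^k$, $w=cz^k$ and $\gamma=(u,v,w)$, primitivity of $(x,y,z)$ gives $\gamma\mid abc$, and the radical of $(u/\gamma)(v/\gamma)(w/\gamma)$ is at most $|abcxyz|/\gamma$; assuming $|x|=\max\{|x|,|y|,|z|\}$, one has $|abcxyz|\le |a|H^2|x|^3$, so in the inequality $|a||x|^k/\gamma\ll\bigl(|a|H^2|x|^3/\gamma\bigr)^{1+\epsi}$ the factor $|a|$ cancels up to $H^{\epsi}$, yielding $\max\{|x|,|y|,|z|\}\ll H^{2/(k-3)+\epsi}$ --- exponent $2$, not $3$. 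With this bound the total number of admissible primitive solution vectors is $\ll H^{6/(k-3)+\epsi}\ll H^{2+\epsi}$ exactly when $k\ge 6$, so the term $1$ in the lattice-point count $\ll 1+H^2/\Delta_{x,y,z}$ may simply be summed over \emph{all} vectors; no statement about which large-height planes meet the box is needed. The small-determinant range is finished by a dyadic summation using $\Delta_{x,y,z}\gg\max\{|x|,|y|,|z|\}^k$: there are $O(A^{3/k})$ vectors with $\Delta_{x,y,z}\asymp A$, each contributing $O(H^2/A)$, and $\sum_A H^2A^{3/k-1}\ll H^{2+\epsi}$. This is Lemma \ref{tesco}, and it is the sharper $abc$ step, not a thin-set argument, that makes $k\ge6$ suffice.

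Your local lower bound is also not correct as described. The Weil bound and Hensel's lemma do handle all primes $p\nmid kabc$, and primes bounded in terms of $k$ can be handled by congruence conditions; but the serious obstruction comes from the \emph{large} primes dividing $abc$, which vary with the triple and can be of size $H$, and which your argument never addresses. A set defined by congruence conditions at finitely many fixed primes has positive density, yet it cannot be contained in the locally soluble set: the locally soluble triples have density zero, as the paper notes via the large-sieve bound $M_{k,\operatorname{loc}}(H)\ll H^3/(\log H)^{\Psi(k)}$. The paper's Lemma \ref{chile2} circumvents this by taking the coefficients to be \emph{primes} $q,r,s\equiv -1\pmod m$ with $m$ as in \eqref{no_online_checkin} (Siegel--Walfisz gives $\gg(H/\log H)^3$ such triples, which is the true source of the logarithms), so that the only dangerous primes are $q,r,s$ themselves; solubility there is then forced by Lemma \ref{hitze} and quadratic reciprocity, which requires the combinatorial observation that among any four primes $\equiv 3\pmod 4$ some three form a good triple satisfying \eqref{freitag}, together with a choice of sign in $qx^k-ry^k\pm sz^k=0$. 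Your phrase ``exactly as in Lemma \ref{chile}'' gestures at the right construction, but the argument you actually describe is a different one and does not prove the bound.
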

Again, Theorem \ref{thm:bonn} follows immediately from Lemma
\ref{chile2} and Lemma \ref{tesco},
to be proved in sections \ref{local} and
\ref{sec:Thue2}, respectively.

\section{Local considerations}
\label{local}
To get a better understanding of $N_{k, \operatorname{loc}}$ we need the following
two well known results, which we state for the convenience of the reader.
\begin{lemma}
\label{goettingen}
Let $k \in \en$, let $p$ be a rational prime exceeding
$k^2(k+1)^2$,  and let $a_1, a_2, a_3 \in \zet$ be coprime to $p$.
Then the congruence
\[
  a_1 x_1^k + a_2 x_2^k + a_3 \equiv 0 \pmod p
\]
has at least one solution.
\end{lemma}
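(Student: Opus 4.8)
The plan is to show that the number of points on the projective variety defined by $a_1 x_1^k + a_2 x_2^k + a_3 x_3^k = 0$ over the finite field $\mathbb{F}_p$ is positive, which (after setting $x_3 = 1$) immediately yields an affine solution to the congruence, since $a_3 x_3^k = a_3$ contributes the constant term we need. The natural tool here is the Weil bound for the number of $\mathbb{F}_p$-points on a smooth projective curve. Concretely, the projective curve $C$ given by a diagonal ternary form of degree $k$ over $\mathbb{F}_p$ is smooth (the partial derivatives $k a_i x_i^{k-1}$ have no common zero away from the origin, using $p \nmid k$, which holds since $p > k^2(k+1)^2 > k$), and it has genus $g = \binom{k-1}{2} = (k-1)(k-2)/2$. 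The Hasse--Weil bound then gives
\[
  |\#C(\mathbb{F}_p) - (p+1)| \le 2g\sqrt{p} = (k-1)(k-2)\sqrt{p}.
\]

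First I would record that $C$ is smooth and compute its genus via the standard formula for a plane curve of degree $k$, namely $g = (k-1)(k-2)/2$. Second, I would invoke the Hasse--Weil bound to conclude that
\[
  \#C(\mathbb{F}_p) \ge p + 1 - (k-1)(k-2)\sqrt{p}.
\]
Third, the arithmetic step: I would verify that the hypothesis $p > k^2(k+1)^2$ forces the right-hand side to be strictly positive. Indeed, $\sqrt{p} > k(k+1) > (k-1)(k-2)$ for $k \ge 1$, so $(k-1)(k-2)\sqrt{p} < p^{1/2}\cdot p^{1/2} = p < p+1$, giving $\#C(\mathbb{F}_p) > 0$. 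Finally, since a projective point on $C$ has some nonzero coordinate and $a_1, a_2, a_3$ are units, one checks that $C$ has points with $x_3 \neq 0$ (the points with $x_3 = 0$ lie on a subvariety of much smaller cardinality, or one argues directly that the bound already exceeds the count of points at infinity), and dehomogenizing at such a point yields the desired solution to $a_1 x_1^k + a_2 x_2^k + a_3 \equiv 0 \pmod p$.

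The main obstacle I anticipate is not the Weil bound itself but the bookkeeping in the final dehomogenization step: I must ensure that among the points counted there is genuinely one with $x_3 \neq 0$, rather than all the surplus points accumulating on the line at infinity $x_3 = 0$. The cleanest way to handle this is to bound the number of points on $C$ with $x_3 = 0$ separately — these satisfy $a_1 x_1^k + a_2 x_2^k = 0$, a projective equation in two variables, so there are at most $k$ of them — and then note that the chosen threshold $p > k^2(k+1)^2$ comfortably dominates $k$ plus the Weil error term, leaving room for an affine point. Since a slightly cruder threshold would already suffice, the generous hypothesis makes all these inequalities routine to verify, and I would keep the argument correspondingly brief.
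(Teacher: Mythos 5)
Your proof is correct, but it takes a different route from the paper, whose entire proof of this lemma is a citation: formula (1.17) of Mordell's paper, which counts solutions of the affine congruence $a_1x_1^k+a_2x_2^k+a_3\equiv 0 \pmod p$ directly via elementary character-sum (Gauss/Jacobi sum) estimates, with the threshold $p>k^2(k+1)^2$ emerging from that estimate. You instead pass to the projective Fermat curve $C\colon a_1x_1^k+a_2x_2^k+a_3x_3^k=0$, check smoothness (valid, since $p\nmid k$ and the $a_i$ are units; note that smoothness of a plane curve also gives the geometric irreducibility that Hasse--Weil requires, a point worth stating explicitly), apply the Hasse--Weil bound with $g=(k-1)(k-2)/2$, and then subtract the at most $k$ points on the line $x_3=0$ before dehomogenizing. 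Your arithmetic checks out: from $\sqrt{p}>k(k+1)$ one gets
\[
  p+1-(k-1)(k-2)\sqrt{p}-k \;>\; \bigl(k(k+1)-(k-1)(k-2)\bigr)\sqrt{p}-k
  \;=\;(4k-2)\sqrt{p}-k\;>\;0,
\]
so an affine point survives. The trade-off is that the paper's cited result is elementary and already phrased affinely (no bookkeeping at infinity needed), whereas your argument leans on the far deeper Riemann hypothesis for curves but is self-contained in its deduction and makes transparent why a threshold of the shape $k^2(k+1)^2$ is more than sufficient.
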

\begin{proof}
See formula (1.17) in \cite{Mordell}.
\end{proof}

\begin{lemma}
\label{hensel}
Let $f(X_1, \ldots, X_s) \in \zet[X_1, \ldots, X_s]$ and let $p$ be
a rational prime.
Suppose that for some $x_1, \ldots, x_s \in \zet$ and some non-negative
integer $n$ we have
\[
  f(x_1, \ldots, x_s) \equiv 0 \pmod {p^{2n+1}}
\]
and
\[
  p^n || \nabla f(x_1, \ldots, x_s).
\]
Then there exist $y_1, \ldots, y_n \in \zet_p$ such that
\[
  y_i \equiv x_i \pmod {p^{n+1}} \quad (1 \le i \le s)
\]
and $f(y_1, \ldots, y_s)=0$.
\end{lemma}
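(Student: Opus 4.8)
The plan is to reduce this multivariate statement to the classical one-variable form of Hensel's lemma (Newton's method over $\zet_p$) by freezing all but one of the coordinates. Since $p^n \, \| \, \nabla f(x_1, \ldots, x_s)$, every partial derivative $\partial f/\partial X_i$ evaluated at $\mathbf{x} = (x_1, \ldots, x_s)$ is divisible by $p^n$, while at least one of them, say $\partial f/\partial X_j(\mathbf{x})$, is not divisible by $p^{n+1}$; hence its $p$-adic valuation $v_p(\partial f/\partial X_j(\mathbf{x}))$ equals exactly $n$. I fix this index $j$ once and for all.

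Next I would introduce the single-variable polynomial obtained by leaving every coordinate but the $j$-th frozen at its given value, namely
\[
  g(X) = f(x_1, \ldots, x_{j-1}, X, x_{j+1}, \ldots, x_s) \in \zet[X].
\]
By construction $g(x_j) = f(\mathbf{x})$ and $g'(x_j) = \partial f/\partial X_j(\mathbf{x})$. The two hypotheses of the lemma therefore translate into $p^{2n+1} \mid g(x_j)$ and $p^n \, \| \, g'(x_j)$, so that the $p$-adic valuations obey
\[
  v_p(g(x_j)) \ge 2n+1 > 2n = 2\, v_p(g'(x_j)).
\]

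This last inequality is precisely the hypothesis of the strong (Newton) form of Hensel's lemma in one variable. Applying it to $g$ with the approximate zero $x_j$ produces a genuine root $y_j \in \zet_p$ of $g$ satisfying $v_p(y_j - x_j) \ge v_p(g(x_j)) - v_p(g'(x_j)) \ge (2n+1) - n = n+1$, that is, $y_j \equiv x_j \pmod{p^{n+1}}$. Setting $y_i = x_i$ for every $i \neq j$, the congruences $y_i \equiv x_i \pmod{p^{n+1}}$ hold trivially for those indices, and $f(y_1, \ldots, y_s) = g(y_j) = 0$, which is exactly the desired conclusion.

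Since this is a routine reduction to a standard result, I do not anticipate any serious obstacle; the only point requiring care is the valuation bookkeeping that guarantees the lifted root lands in the prescribed residue class modulo $p^{n+1}$, and this is precisely what the quantitative one-variable Hensel statement delivers.
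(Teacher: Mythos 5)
Your proposal is correct, and it necessarily differs from the paper's treatment, because the paper does not prove this lemma at all: it simply cites it as a known version of Hensel's lemma (page 64 of Greenberg's book). Your argument supplies an actual proof, via the standard reduction to the one-variable case: you use the exact-divisibility hypothesis $p^n \, \| \, \nabla f(\mathbf{x})$ to isolate a coordinate $j$ with $v_p\bigl(\partial f/\partial X_j(\mathbf{x})\bigr) = n$, freeze the remaining coordinates to get $g(X) = f(x_1,\ldots,X,\ldots,x_s)$, and observe that $v_p(g(x_j)) \ge 2n+1 > 2n = 2\,v_p(g'(x_j))$, which is precisely the hypothesis of the strong (Newton) form of the univariate Hensel lemma; the quantitative conclusion $v_p(y_j - x_j) \ge v_p(g(x_j)) - v_p(g'(x_j)) \ge n+1$ then lands the root in the required residue class, and setting $y_i = x_i$ for $i \ne j$ finishes the proof. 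The valuation bookkeeping is right at every step. What your route buys is self-containedness modulo only the classical one-variable statement, whereas the paper outsources the entire multivariate formulation to the literature; the trade-off is that your proof still treats the univariate Newton--Hensel lemma as a black box, but that is a genuinely standard result and entirely appropriate to cite.
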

\begin{proof}
This is a version of Hensel's lemma, see for example page 64 in
\cite{Greenberg}.
\end{proof}
\begin{lemma}
\label{hitze}
Let $p$ be an odd prime and $a \in \zet$ with $(a/p)=1$.
Further, let $k \in \en$ such that $p \equiv -1 \pmod k$. Then the
congruence $x^k \equiv a \pmod p$ has a solution.
\end{lemma}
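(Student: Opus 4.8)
The plan is to work in the cyclic multiplicative group $(\zet/p\zet)^\times$, which has order $p-1$, and to reduce solvability of $x^k \equiv a \pmod{p}$ to a power-residue criterion. Since $(a/p)=1$ forces $p \nmid a$, the class of $a$ lies in $(\zet/p\zet)^\times$, and the congruence $x^k \equiv a \pmod p$ is solvable precisely when $a$ lies in the image of the $k$-th power map $x \mapsto x^k$. By cyclicity this image is the unique subgroup of index $d := \gcd(k, p-1)$, so that, by the generalisation of Euler's criterion to $k$-th powers, the congruence has a solution if and only if
\[
  a^{(p-1)/d} \equiv 1 \pmod p.
\]

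The key step is to use the hypothesis $p \equiv -1 \pmod k$ to pin down $d$. Since $k \mid p+1$, we have $p-1 \equiv -2 \pmod k$, and therefore
\[
  d = \gcd(k, p-1) = \gcd(k, -2) = \gcd(k, 2) \in \{1, 2\}.
\]
I would then split into the two parity cases for $k$. If $k$ is odd, then $d=1$, and the criterion reads $a^{p-1} \equiv 1 \pmod p$, which holds by Fermat's little theorem; hence every class coprime to $p$, in particular $a$, is a $k$-th power. If $k$ is even, then $d=2$, and the criterion becomes $a^{(p-1)/2} \equiv 1 \pmod p$, which is exactly the assertion $(a/p)=1$, i.e.\ our hypothesis. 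In both cases the criterion is satisfied, so the congruence is solvable.

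I expect no genuine obstacle here: the argument is entirely elementary once the cyclicity of $(\zet/p\zet)^\times$ and the $k$-th power criterion are in hand. The only point demanding a little care is the reduction $d=\gcd(k,2)$ together with the bookkeeping of the two parity cases, and this is precisely where the hypothesis $p \equiv -1 \pmod k$ does its work: it guarantees that $k$ shares no odd prime factor with $p-1$, so that being a $k$-th power residue is no stronger a condition than being a quadratic residue.
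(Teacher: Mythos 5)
Your proof is correct and takes essentially the same approach as the paper: both exploit the cyclicity of $(\zet/p\zet)^\times$, reduce via $p \equiv -1 \pmod k$ to $\gcd(k,p-1) = \gcd(k,2) \in \{1,2\}$, and then conclude by surjectivity of the $k$-th power map when $k$ is odd and by the quadratic-residue hypothesis when $k$ is even. The only cosmetic difference is that you invoke the generalized Euler criterion $a^{(p-1)/d} \equiv 1 \pmod p$, while the paper identifies the image of the power map with the quadratic residues via uniqueness of the index-$2$ subgroup of a cyclic group.
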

\begin{proof}
Let $G$ be the multiplicative group of non-zero residue classes modulo
$p$, and let $\varphi : G \rightarrow G$ be the map given by
$\varphi(x)=x^k$ for $x \in G$.
If $k$ is odd, then $p \equiv -1 \pmod k$ implies that $(p-1, k)=1$, so
$\varphi$ is surjective and the conclusion
immediately follows. If $k$ is even, then
$(p-1, k)=2$ by $p \equiv -1 \pmod k$, so $\varphi(G)$ is a subgroup
of $G$ of index $2$. Since $G$ is cyclic, the only such subgroup is
the group of quadratic residues modulo $p$, and as $(a/p)=1$, the
conclusion follows again.
\end{proof}
We are now in a position to derive a lower bound for $N_{k, \operatorname{loc}}$.
\begin{lemma}
\label{chile}
We have
\[
  N_{k, \operatorname{loc}}(H) \gg \left( \frac{H}{\log H} \right)^2.
\]
\end{lemma}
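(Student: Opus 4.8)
The plan is to produce an explicit two-parameter family of pairs $(a,b)$, all of which are locally soluble, of cardinality $\gg (H/\log H)^2$. I would take both $a$ and $b$ to be \emph{positive primes} lying in fixed residue classes modulo a fixed modulus $D=D(k)$. By the prime number theorem for arithmetic progressions, the number of primes up to $H$ in any fixed class coprime to $D$ is $\gg H/\log H$, so any family cut out by conditions on $a$ alone and on $b$ alone automatically has size $\gg (H/\log H)^2$. The only place where genuine care is needed is when $k$ is even, where local solubility at the two primes dividing $ab$ forces a coupling between $a$ and $b$.

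First I would dispose of the `generic' places. Restrict to primes $a,b$ exceeding $k^2(k+1)^2$ with $a\equiv b\equiv -1\pmod k$. For any prime $p\nmid ab$ with $p>k^2(k+1)^2$, Lemma \ref{goettingen} applied with $(a_1,a_2,a_3)=(a,b,-1)$ yields a solution of $ax^k+by^k\equiv 1\pmod p$ in which the two variables do not both vanish (else $-1\equiv 0$); the corresponding partial derivative of $ax^k+by^k-1$ is then a $p$-adic unit, so Lemma \ref{hensel} with $n=0$ lifts it to a point of $\zet_p$. For the finitely many primes $p\le k^2(k+1)^2$ I would impose congruence conditions on $a$ modulo the fixed power of $p$ dividing $D$: for $p\nmid k$ one may simply demand $a\equiv 1\pmod p$ and lift the solution $(1,0)$, and for the (finitely many) $p\mid k$ one checks directly that a Hensel-liftable solution exists consistently with $a\equiv b\equiv -1\pmod k$ — this finite verification is the only bookkeeping the argument needs. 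Real solubility is free: for odd $k$ one solves $ax^k=1$ with a real $k$-th root, and for even $k$ the positivity of $a$ gives the point $(a^{-1/k},0)$.

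There remain the two primes $p=a$ and $p=b$. Modulo $a$ the equation collapses to $by^k\equiv 1$, so solubility in $\zet_a$ is equivalent to $b$ being a $k$-th power residue modulo $a$, and symmetrically at $p=b$. When $k$ is odd, the condition $a\equiv -1\pmod k$ gives $\gcd(a-1,k)=1$, so by Lemma \ref{hitze} the $k$-th power map modulo $a$ is surjective and no condition on $b$ is needed; the same holds at $p=b$. Thus for odd $k$ the family is defined purely by conditions on $a$ alone and on $b$ alone, and the prime number theorem for arithmetic progressions delivers $\gg (H/\log H)^2$ locally soluble pairs, completing the lemma in this case.

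For even $k$ the $k$-th powers modulo $a$ form a proper subgroup, so the two places impose genuine power-residue conditions linking $a$ and $b$, and \textbf{this coupling is the main obstacle.} In the representative subcase $4\nmid k$ one can take $\gcd(a-1,k)=2$, so the conditions become $\left(\tfrac{b}{a}\right)=1$ and $\left(\tfrac{a}{b}\right)=1$; imposing in addition $a\equiv 1\pmod 4$ makes these two Legendre symbols equal by quadratic reciprocity, and I would then estimate the count by writing it as $\tfrac12\sum_{a,b}\left(1+\left(\tfrac{b}{a}\right)\right)$ and showing that the error $\sum_{a,b}\left(\tfrac{b}{a}\right)$ is $o\bigl((H/\log H)^2\bigr)$, using cancellation in the inner character sum over primes $b$ (Pólya–Vinogradov together with a Vinogradov-type treatment of the prime weight) for each non-square $a$. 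When $4\mid k$ the quadratic reciprocity sign obstruction forces one instead to work with the higher power-residue symbol of order equal to the relevant part of $\gcd(a-1,k)$, choosing the residues of $a,b$ modulo the $2$-part of $k$ so as to avoid the reciprocity obstruction and then invoking the analogous cancellation for power-residue characters (equivalently, equidistribution of Frobenius in a cyclotomic extension). In every case one extracts a positive proportion of the $\asymp (H/\log H)^2$ eligible prime pairs, which yields the stated lower bound.
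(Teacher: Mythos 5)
Your argument for odd $k$ is sound and uses essentially the paper's toolkit (Siegel--Walfisz/PNT in progressions, Lemma \ref{goettingen} plus Lemma \ref{hensel} at generic primes, surjectivity of $x \mapsto x^k$ from Lemma \ref{hitze} at $p=a,b$). But for even $k$ the proposal has a fatal gap, and it sits precisely in the step you dismiss as ``finite verification'' at the primes dividing $k$: the prime $p=2$. For even $k \ge 4$ and odd coefficients $a,b$, any $2$-adic solution of $ax^k+by^k=1$ must have $x$ odd and $y$ even or vice versa (both odd gives $a+b \equiv 1 \pmod 8$, impossible by parity; both even gives valuation $\ge k$), and since $x$ odd implies $x^k \equiv 1 \pmod 8$ while $y$ even implies $y^k \equiv 0 \pmod 8$, solubility over $\zet_2$ forces $a \equiv 1 \pmod 8$ or $b \equiv 1 \pmod 8$. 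When $4 \mid k$, your standing hypothesis $a \equiv b \equiv -1 \pmod k$ forces $a \equiv b \equiv 3 \pmod 4$, which is incompatible with either congruence: your family contains \emph{no} locally soluble pairs at all, so no amount of character-sum work at the places $p=a,b$ can rescue it. You cannot drop the condition $a \equiv -1 \pmod k$ either, since it is what makes Lemma \ref{hitze} applicable at $p=a$; abandoning it turns the conditions at $p=a,b$ into genuine $k$-th power residue conditions of index $>2$, far beyond the sketched reciprocity-plus-P\'olya--Vinogradov argument. Two smaller inaccuracies in the same spirit: for $k \equiv 2 \pmod 4$ you need $a \equiv 1 \pmod 8$, not merely $a \equiv 1 \pmod 4$; and at odd primes $p \mid k$ with $k$ even the local condition can genuinely couple $a$ and $b$ (for $k=6$, $p=3$ one is forced into $a+b \equiv 1 \pmod 9$), contradicting your claim that the family is cut out by conditions on $a$ alone and on $b$ alone --- this particular coupling is harmless for counting, but it shows the small primes are not mere bookkeeping.

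The paper's proof avoids all of this with one structural choice that your proposal treats as cosmetic: the sign. It works with $qx^k - ry^k = 1$, i.e.\ coefficient pair $(q,-r)$, where $q \equiv r \equiv -1 \pmod m$ for the modulus $m$ of \eqref{no_online_checkin}, which packs high powers of \emph{every} prime $p \le k^2(k+1)^2$, including $p=2$. Then $-r \equiv 1 \pmod {p^{2\alpha_p+1}}$, so $(x,y)=(0,1)$ solves the congruence at every small prime and Hensel-lifts; the $2$-adic obstruction above simply never arises. Better still, at $p=q$ and $p=r$ the required conditions are $(-r/q)=1$ and $(q/r)=1$, and since $q \equiv r \equiv 3 \pmod 4$ quadratic reciprocity gives $(-r/q) = -(r/q) = (q/r)$: the two conditions are \emph{equivalent}, and exactly one of the two orderings of each pair $\{q,r\}$ satisfies them. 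A pigeonhole over orderings then retains half of all $\gg (H/\log H)^2$ pairs, with no cancellation estimates for character sums over primes needed anywhere. In short, the mixed sign converts the reciprocity coupling you identify as ``the main obstacle'' into a free win, and it is also the only way to keep the small-prime analysis uniform in $k$.
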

\begin{proof}
For each rational prime $p$, define $\alpha_p$ by $p^{\alpha_p}
\mid \mid k$, and let
\begin{equation}
\label{no_online_checkin}
  m = \prod_{p \le k^2(k+1)^2} p^{2\alpha_p+2}.
\end{equation}
By the Siegel-Walfisz Theorem (see for example Corollary 5.29 in
\cite{Iwaniec-Kowalski}), there are
\[
  \gg_k \left( \frac{H}{\log H} \right)^2
\]
pairs of primes $q, r$ such that
$k^2(k+1)^2<q,r \le H$,  $q \ne r$ and $q \equiv r \equiv -1
\pmod m$. In particular, we then have
$q \equiv r \equiv -1 \pmod k$ and
$q \equiv r \equiv 3 \pmod 4$, so by the law of quadratic
reciprocity, for each such pair $(q, r)$ either
\begin{equation}
\label{terasse}
  \left( \frac{q}{r} \right) = 1 = -\left( \frac{r}{q} \right)
\end{equation}
or $(r/q)=1=-(q/r)$.
By interchanging the roles of $q$ and $r$ if necessary, we may without
loss of generality assume that there are $\gg (H/\log H)^2$ such pairs
$(q, r)$ as above for which the first alternative \eqref{terasse}
holds true. With respect to Lemma \ref{chile} it is then enough to show
that for each such fixed pair $(q, r)$ the Thue equation
\begin{equation}
\label{kopenhagen}
  qx^k-ry^k = 1
\end{equation}
has local solutions everywhere. Since $q>0$ and $r>0$, there are clearly
real solutions, so let us focus on $p$-adic solubility for any given
rational prime $p$.
Let us first discuss the case that $p \le k^2(k+1)^2$. In particular,
$p$ is then coprime to $r$.
Then in order to find a solution of \eqref{kopenhagen} in $\zet_p$,
by Lemma \ref{hensel} it suffices to find a solution of the congruence
\begin{equation}
\label{cong}
  qx^k-ry^k \equiv 1 \pmod {p^{2\alpha_p+1}}
\end{equation}
with $p$ not dividing $y$.
As $r \equiv -1 \pmod m$, by \eqref{no_online_checkin} also
$r \equiv -1 \pmod {p^{2\alpha_p+1}}$, so $x=0, y=1$ is such a solution.
Next, let us assume that $p>k^2(k+1)^2$.
Then $(p,k)=1$, so $\alpha_p=0$.
If $p$ is different from $q$ and $r$, then Lemma \ref{goettingen}
provides a non-singular
solution of \eqref{cong}, which again by Lemma \ref{hensel}
can be lifted to a solution of \eqref{kopenhagen} over $\zet_p$.
Finally, it remains to discuss the two cases $p=q$ and $p=r$. In both
cases, $\alpha_p=0$. For $p=q$, as above we need to find a solution of
the congruence
\begin{equation}
\label{pq}
  -ry^k \equiv 1 \pmod q.
\end{equation}
Now $q \equiv -1 \pmod k$, so by Lemma \ref{hitze} this can be done
providing that $(-r/q)=1$, and the latter condition follows from
$q \equiv 3 \pmod 4$ and \eqref{terasse}. For $p=r$, we need to solve
\[
  qx^k \equiv 1 \pmod r.
\]
Again, $r \equiv -1 \pmod k$, equation \eqref{terasse} and
Lemma \ref{hitze} provide a solution of the
latter congruence. This finishes the proof of Lemma \ref{chile}.
\end{proof}
\begin{lemma}
\label{chile2}
We have
\[
  M_{k, \operatorname{loc}}(H) \gg \left( \frac{H}{\log H} \right)^3.
\]
\end{lemma}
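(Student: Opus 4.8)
The plan is to mimic the proof of Lemma~\ref{chile}, but now with \emph{three} auxiliary primes instead of two, so as to gain the extra factor $H/\log H$. Keeping the modulus $m$ from \eqref{no_online_checkin}, I would invoke the Siegel--Walfisz theorem to produce $\gg_k (H/\log H)^3$ triples of \emph{distinct} primes $q,r,s$ with $k^2(k+1)^2 < q,r,s \le H$ and $q \equiv r \equiv s \equiv -1 \pmod m$; in particular each of them satisfies $\equiv -1 \pmod k$ and $\equiv 3 \pmod 4$. For each such triple I would study the Fermat equation
\[
  qx^k + ry^k - sz^k = 0 \quad (k \text{ even}), \qquad qx^k + ry^k + sz^k = 0 \quad (k \text{ odd}),
\]
the choice of signs guaranteeing a nontrivial real solution in either case (for even $k$ one needs a genuine sign change, whereas for odd $k$ every sign pattern works since $t \mapsto t^k$ is onto). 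Each admissible triple then produces at least one admissible coefficient vector $(a,b,c)$, so it suffices to count triples.

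For the finite places away from $\{q,r,s\}$ the argument runs exactly as in Lemma~\ref{chile}. If $p \le k^2(k+1)^2$ then $p \nmid qrs$ and, since $q \equiv r \equiv s \equiv -1 \pmod{p^{2\alpha_p+1}}$, the point $(0,1,1)$ (for even $k$), respectively $(1,-1,0)$ (for odd $k$), solves the congruence modulo $p^{2\alpha_p+1}$ while $p^{\alpha_p} \mid\mid \nabla f$ there, so Lemma~\ref{hensel} lifts it to $\zet_p$. If $p > k^2(k+1)^2$ and $p \nmid qrs$, then setting $z=1$ (resp.\ $y=1$) reduces the problem to a congruence covered by Lemma~\ref{goettingen}; here non-singularity is automatic, because the partial derivative in the remaining variable is a unit times $k$, which is coprime to $p$, so Lemma~\ref{hensel} applies once more.

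The only genuinely new point is solubility at $p \in \{q,r,s\}$. Reducing modulo $p$ annihilates one monomial and leaves a two-term equation such as $ry^k \equiv sz^k \pmod q$, which by Lemma~\ref{hitze} is soluble provided the relevant ratio is a $k$-th power residue. For odd $k$ Lemma~\ref{hitze} shows that \emph{every} residue is a $k$-th power, so there is nothing to check and the odd case is complete. For even $k$ the $k$-th powers are exactly the quadratic residues, and one is left with the three conditions $(s/q)=(r/q)$, $(s/r)=(q/r)$ and $(r/s)(q/s)=-1$; using quadratic reciprocity for the three primes $\equiv 3 \pmod 4$ (and $(-1/s)=-1$) these collapse to the two relations $(q/s)=(q/r)$ and $(r/s)=-(q/r)$, the third becoming automatic.

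Finally I would arrange these two relations to hold for a positive proportion of the triples. Permuting which prime plays the distinguished (negative) role $s$ — swapping $q$ and $r$ leaves the conditions unchanged — one checks that a valid assignment exists for six of the eight possible patterns of the Legendre symbols $(q/r),(q/s),(r/s)$. The hard part is therefore purely analytic: one must know that each admissible pattern is realised by $\gg (H/\log H)^3$ of the prime triples. This is an equidistribution statement for the joint values of these Legendre symbols over primes in the progression $-1 \pmod m$; the diagonal contribution is $\tfrac18$ of all triples, while the off-diagonal character sums $\sum (q/r)$, etc., are of smaller order by Siegel--Walfisz together with the large sieve. Combining this with the preceding paragraphs yields $\gg (H/\log H)^3$ everywhere locally soluble vectors $(a,b,c)$, which is the assertion of Lemma~\ref{chile2}.
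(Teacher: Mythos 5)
Your local analysis is sound and essentially coincides with the paper's: the same modulus $m$ from \eqref{no_online_checkin}, Siegel--Walfisz primes $q\equiv r\equiv s\equiv -1 \pmod m$, Hensel lifting at small primes via $q\equiv r\equiv s \pmod{p^{2\alpha_p+1}}$, Lemma \ref{goettingen} at large primes away from $q,r,s$, surjectivity of $x\mapsto x^k$ for odd $k$ (so for odd $k$ your proof is already complete), and Lemma \ref{hitze} reducing even $k$ to quadratic residue conditions at $p\in\{q,r,s\}$. Your bookkeeping of those conditions is also correct: after reciprocity they collapse to $(q/s)=(q/r)$ and $(r/s)=-(q/r)$, and exactly six of the eight sign patterns of $\bigl((q/r),(q/s),(r/s)\bigr)$ admit an assignment of roles. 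The two excluded patterns are precisely the triples the paper discards by its notion of a \emph{good} triple, so up to this point the two arguments are the same combinatorics in different clothing; the paper merely fixes the shape $qx^k-ry^k\mp sz^k$ and splits into the cases \eqref{alt1} and \eqref{alt2}.

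The gap is in your final, ``purely analytic'' step for even $k$. To show that a positive proportion of prime triples realizes an admissible pattern you invoke equidistribution of the joint Legendre symbols ``by Siegel--Walfisz together with the large sieve'', but neither tool suffices. Expanding the indicator of a fixed pattern produces, besides terms that vanish identically by the antisymmetry $(q/r)=-(r/q)$ (namely $\sum(q/r)$ and the triple product $\sum(q/r)(q/s)(r/s)$), genuine correlation sums such as
\[
  \sum_{q,r,s}\left(\frac{q}{r}\right)\left(\frac{q}{s}\right)
  \;=\; \sum_{q}\Biggl(\sum_{r}\left(\frac{q}{r}\right)\Biggr)^{2}
  \;+\;O\Bigl(\bigl(H/\log H\bigr)^{2}\Bigr),
\]
a mean square of real character sums of conductor up to $H$ over primes up to $H$. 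Siegel--Walfisz controls only moduli of size $(\log H)^{A}$, and the classical large sieve, applied to conductors up to $H^{2}$ with sums of length $H$, produces a bound of order $H^{5}$, which is worse than trivial. What does close this is Heath-Brown's quadratic large sieve for real characters, which bounds the displayed quantity by $O(H^{2+\epsi})$ and hence yields your equidistribution claim; so your route can be completed, but only with substantially heavier machinery than you cite. The paper avoids all character-sum estimates with an elementary pigeonhole observation: among any four distinct primes $\equiv 3 \pmod 4$, some three form a good triple (in tournament language, every tournament on four vertices contains a transitive triangle), so if $P\gg H/\log H$ denotes the number of available primes, the good triples already number $\gg P^{4}/P = P^{3}$. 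Substituting that observation for your equidistribution step repairs the proof at no analytic cost.
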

\begin{proof}
The proof is similar to that of Lemma \ref{chile}.
We call a triple $(p_1, p_2, p_3)$ of distinct primes $p_i$
with $p_i \equiv 3 \pmod 4 \; (1 \le i \le 3)$ \emph{good} if there
exists $i \in \{1,2,3\}$ such that
\[
  \left( \frac{p_i}{p_j} \right) =
  \left( \frac{p_i}{p_k} \right),
\]
where $\{i, j, k\}=\{1,2,3\}$. Clearly, for any given quadruple
$(p_1, p_2, p_3, p_4)$ of distinct
primes $p_i$ with $p_i \equiv 3 \pmod 4
\; (1 \le i \le 4)$, we can find three amongst them, say
$p_1, p_2, p_3$, such that $(p_1, p_2, p_3)$ is a good triple.
Now define $m$ by
\eqref{no_online_checkin}.
Then by the Siegel-Walfisz Theorem, and the observation above, we can find
\[
  \gg_k \left( \frac{H}{\log H} \right)^3
\]
triples of distinct primes $q, r, s$ such that
$k^2(k+1)^2<q,r, s \le H$, $q \equiv r \equiv s \equiv -1
\pmod m$
and
\begin{equation}
\label{freitag}
  \left( \frac{s}{q} \right) = \left( \frac{s}{r} \right).
\end{equation}
Note that automatically
$q \equiv r \equiv s \equiv -1 \pmod k$ and
$q \equiv r \equiv s \equiv 3 \pmod 4$.
Now fix any such triple $(q,r,s)$. Using \eqref{freitag},
$q \equiv r \equiv s \equiv 3 \pmod 4$ and the law of quadratic
reciprocity, we find that either
\begin{equation}
\label{alt1}
  \left( \frac{-rs}{q} \right) = \left( \frac{qs}{r} \right) = 1
\end{equation}
or
\begin{equation}
\label{alt2}
  \left( \frac{-rs}{q} \right) = \left( \frac{qs}{r} \right) = -1.
\end{equation}
In the first case, let us consider the equation
\begin{equation}
\label{type_1}
  qx^k-ry^k-sz^k=0.
\end{equation}
There are clearly non-trivial real solutions, and for
$p \le k^2(k+1)^2$ we can follow the argument from the proof of Lemma
\ref{chile} to show that there are non-trivial $p$-adic zeros:
As $q \equiv r \pmod m$, also $q \equiv r \pmod {p^{2\alpha_p+1}}$,
so $(x,y,z)=(1,1,0)$ is a solution of 
\begin{equation}
\label{cong_type_1}
  qx^k-ry^k-sz^k \equiv 0 \pmod {p^{2\alpha_p+1}},
\end{equation}
which by Lemma \ref{hensel} can be lifted to a non-trivial solution of
\eqref{type_1} over $\zet_p$.
If $p>k^2(k+1)^2$, then $\alpha_p=0$.
If in addition $p$ is different from $q,r,s$,
then we can set $z=1$ and use Lemma \ref{goettingen}
to find a non-singular solution of \eqref{cong_type_1}, which again by Lemma
\ref{hensel} lifts to a non-trivial solution of \eqref{type_1} over
$\zet_p$, so it
remains to discuss the case $p \in \{q,r,s\}$. Then $\alpha_p=0$,
so by Lemma \ref{hensel} it suffices to find a non-singular solution of
\[
  qx^k-ry^k-sz^k \equiv 0 \pmod p.
\]
If $k$ is odd, this is easy, since the map $x \mapsto x^k$
is surjective modulo $p$,
as $q \equiv r \equiv s \equiv -1 \pmod k$.
For even $k$, by Lemma \ref{hitze}, it
is enough to find a non-singular solution of
\begin{equation}
\label{flat}
  qx^2-ry^2-sz^2 \equiv 0 \pmod p.
\end{equation}
For $p \in \{q,r\}$ this immediately follows from \eqref{alt1}. For $p=s$,
note that \eqref{freitag} and $q \equiv r \equiv s \equiv 3 \pmod 4$
imply that
\[
  \left( \frac{qr}{s} \right) = 1,
\]
again showing that \eqref{flat} has a non-singular solution. (In fact,
by the Hasse principle for ternary quadratic forms
(see for example Corollary 3 on page 43 of \cite{Serre_GTM}),
as we had already shown non-trivial local solubility of $qx^2-ry^2-sz^2=0$
over $\er$ and all local fields except possibly $\qu_s$, the existence of
a non-trivial solution over $\qu_s$ would have followed automatically, but
we preferred to show it directly.)
Let us now
briefly discuss the second case \eqref{alt2}. Instead of \eqref{type_1},
we now consider the equation
\[
  qx^k-ry^k+sz^k = 0.
\]
The only slight difference then is the argument for $p \in \{q,r,s\}$
and even $k$.
Again, we need to make sure that
\[
  qx^2-ry^2+sz^2 \equiv 0 \pmod p
\]
has a non-singular solution, which reduces to checking that
\[
  \left( \frac{rs}{q} \right) = \left( \frac{-qs}{r} \right)
  = \left( \frac{qr}{s} \right) = 1,
\]
and again these properties follow from
$q \equiv r \equiv s \equiv 3 \pmod 4$, \eqref{freitag} and \eqref{alt2}.

\end{proof}
Regarding upper bounds, note that an application of the large sieve gives
\[
  M_{k, \operatorname{loc}}(H) \ll \frac{H^3}{(\log H)^{\Psi(k)}}
\]
where
\[
  \Psi(k) = \frac{3}{\phi(k)}\left(1-\frac{1}{k}\right)
\]
and $\phi$ denotes Euler's totient function
(see Theorem 1.1 in \cite{Browning-Dietmann09}; for composite $k$,
the bound could be improved somewhat). It would be interesting to
decide what is the true order of magnitude for this quantity.
In this direction, for
$k=2$, Hooley \cite{Hooley93} and independently Guo \cite{Guo}
obtained the sharp bound
$M_{2, \operatorname{loc}}(H) \gg H^3/(\log H)^{3/2}$.
\section{The density of soluble Thue equations}
\label{sec:Thue2}

To prove Theorem \ref{punta_arenas}, it remains to bound the quantity
$N_{k,\operatorname{glob}}(H)$ from above,
assuming the truth of the $abc$-conjecture.
To this end, in the case $k=3$, we shall use the results of
\cite{Dietmann-Marmon},
whereas for larger $k$, an elementary argument will suffice. We shall prove the following result.
\begin{lemma}
\label{sonnig}
Assume the truth of the $abc$-conjecture. Then we have
\[
  N_{k, \operatorname{glob}}(H) \ll \begin{cases}
                      H^{47/27+\varepsilon} & \text{for } k = 3,\\
                      H^{1+ \epsi} & \text{for } k \geq 4.
                     \end{cases}
\]
\end{lemma}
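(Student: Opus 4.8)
The plan is to bound $N_{k,\operatorname{glob}}(H)$ by reversing the roles of the coefficients $a,b$ and the $k$-th powers $x^k,y^k$: instead of fixing $(a,b)$ and counting solutions, I fix a solution $(x,y)$ and count the coefficient pairs it can produce. Every soluble $(a,b)$ carries at least one solution, so
\[
  N_{k,\operatorname{glob}}(H)\le \sum_{(x,y)} \#\{(a,b): 0<|a|,|b|\le H,\ ax^k+by^k=1\},
\]
the outer sum being over the $(x,y)$ that actually arise; overcounting $(a,b)$ lying on several lines is harmless for an upper bound. I first peel off the degenerate solutions with $xy=0$. If $x=0$ then $by^k=1$ forces $|b|=|y|=1$ while $a$ is free, giving $\ll H$ pairs; the case $y=0$ is symmetric. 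Hence the degenerate solutions contribute $\ll H$, already of the size claimed for $k\ge 4$. From now on $x,y\ne 0$, and since $d=\gcd(x,y)$ satisfies $d^k\mid ax^k+by^k=1$ we have $\gcd(x,y)=1$.

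The engine of the proof is the $abc$-conjecture, which I use to bound the height of the non-degenerate solutions. As the right-hand side of $ax^k+by^k=1$ equals $1$, the triple $(ax^k,by^k,1)$ is automatically coprime, and the radical of $ax^k\cdot by^k\cdot 1$ equals $\operatorname{rad}(abxy)\le |a||b||x||y|\le H^2|x||y|$. Therefore
\[
  |a||x|^k=|ax^k|\ll_\varepsilon \operatorname{rad}(abxy)^{1+\varepsilon}\le (|a||b||x||y|)^{1+\varepsilon}.
\]
Assuming without loss of generality that $|x|\ge |y|$ and dividing through by $|a|\ge 1$ (this division is exactly what produces a usable exponent) yields
\[
  |x|^k\ll |a|^\varepsilon\,(|b||x|^2)^{1+\varepsilon}\le H^{1+2\varepsilon}|x|^{2+2\varepsilon},
\]
so that $|x|^{k-2-2\varepsilon}\ll H^{1+2\varepsilon}$ and hence $\max(|x|,|y|)\ll H^{1/(k-2)+\varepsilon}=:X_0$ for every $k\ge 3$. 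Thus all non-degenerate solutions have height at most $X_0$.

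For the inner count I use that, because $\gcd(x,y)=1$, the integer points of the line $ax^k+by^k=1$ form a single coset of the rank-one lattice spanned by $(y^k,-x^k)$, so the number of them in $[-H,H]^2$ is $\ll 1+H/\max(|x|,|y|)^k$. Summing over all $(x,y)$ with $1\le \max(|x|,|y|)\le X_0$,
\[
  \sum_{(x,y)}\Bigl(1+\frac{H}{\max(|x|,|y|)^k}\Bigr)\ll X_0^2+H\sum_{n\ge 1}n\cdot\frac{1}{n^k}\ll X_0^2+H,
\]
the tail converging since $k\ge 3$. For $k\ge 4$ one has $X_0^2\ll H^{2/(k-2)+\varepsilon}\le H^{1+\varepsilon}$, and together with the degenerate contribution this gives $N_{k,\operatorname{glob}}(H)\ll H^{1+\varepsilon}$, as required.

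The same reduction when $k=3$ gives only $X_0\approx H$, so the crude estimate $X_0^2\approx H^2$ for the number of admissible $(x,y)$ is useless: for cubic $k$ the lines are too flat and genuinely many $(x,y)$ of height up to $H$ can meet the box. This is the main obstacle, and it cannot be dispatched by the elementary counting above. Instead I would invoke the counting result of Dietmann and Marmon \cite{Dietmann-Marmon}, which estimates the number of integer points $(a,b,x,y)$ on $ax^3+by^3=1$ with $|a|,|b|\le H$ (equivalently, the number of admissible $(x,y)$ whose line meets the box), supplying the bound $H^{47/27+\varepsilon}$; importing this in place of the trivial $X_0^2$ completes the case $k=3$. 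The whole argument thus rests on two inputs: the $abc$-bound $X_0\ll H^{1/(k-2)+\varepsilon}$ on the solution height, which makes the elementary sum converge for $k\ge 4$, and the sharper point count of \cite{Dietmann-Marmon} needed to beat $H^2$ when $k=3$.
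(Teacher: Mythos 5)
Your treatment of the case $k \ge 4$ is correct and is essentially the paper's own argument: the $abc$-conjecture applied to the coprime triple $(ax^k,by^k,1)$ gives $\max\{|x|,|y|\} \ll H^{1/(k-2)+\epsi}$, and summing the lattice-point count $1+H/\max\{|x|,|y|\}^k$ over this range gives $H^{1+\epsi}$ once $2/(k-2)\le 1$. Your explicit handling of the degenerate solutions with $xy=0$ and your coset-of-a-rank-one-lattice justification of the inner count are harmless elaborations of what the paper does.

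The case $k=3$, however, has a genuine gap: you import the bound $H^{47/27+\epsi}$ from \cite{Dietmann-Marmon} as a black box, but no such bound is available there. What that reference supplies is Proposition \ref{prop:determinant}: a determinant-method estimate for the dyadic count $N(X,Y,Z)$ of quadruples $(a,b,x,y)\in\NN^4$ with $ax^3-by^3=1$, $X<x\le 2X$, $Y<y\le 2Y$, $Z<by^3\le 2Z$, of the shape $N(X,Y,Z)\ll Z^\epsi(XM^{1/2}+Y)$, and valid only when the auxiliary parameter $M$ satisfies the technical condition \eqref{eq:logM}. The exponent $47/27$ does not appear in the cited work; it is the output of an optimization that constitutes the substance of the paper's proof for $k=3$: one passes to positive solutions of $ax^3-by^3=1$, splits the sum over $(x,y)$ at $xy\le Q$ (the small range contributing $\ll Q\log Q+H$), decomposes the large range dyadically, chooses $M$ as in \eqref{eq:chooseM}, verifies the admissibility condition $M\le Z$ (which requires the computation $\Phi(\alpha,\beta)\le g(9/40)=81/160<1$, itself resting on the lower bound $\alpha\ge(\gamma-1)/\tau$ forced by $xy>Q$), bounds the resulting exponent by $(47-11\gamma)/16$, and finally balances against $Q=H^\gamma$ to obtain $\gamma=47/27$. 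Note also that the $abc$ input enters this case in an essential way beyond what you used: it confines $x,y\ll H^{1+\epsi}$ and hence $Z\ll H^{4+\epsi}$, without which the dyadic maximum ranges over unboundedly many scales and the optimization cannot close. So while your frame (an $abc$ height bound plus a point count replacing the trivial $X_0^2$) is the right one, the entire derivation of $47/27$ --- the actual content of the lemma for $k=3$ --- is missing from your proposal.
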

The proof of Lemma \ref{sonnig} begins with the following observation: let $a, b, x, y \in \zet$ where $0<\max\{|a|, |b|\} \le H$.
Suppose that
\[
  ax^k + by^k = 1.
\]
If the $abc$-conjecture holds true, then
\begin{equation}
\label{eq:abc1}
  \max\{|ax^k|, |by^k|\} \ll
  \left( \prod_{p \, | \, abx^ky^k} p \right)^{1+\varepsilon}
  \ll |abxy|^{1+\varepsilon}.
\end{equation}
By symmetry,
without loss of generality, we can assume that $|y| \ge |x|$. Then
\[
  |y^k| \ll H^{1+\varepsilon} |x|^{1+\varepsilon} |y|^{1+\varepsilon}
  \ll H^{1+\varepsilon} |y|^{2+\varepsilon},
\]
so
\begin{equation}
\label{eq:abc2}
|y| \ll H^{1/(k-2)+\varepsilon}, \quad \text{whence also} \quad |x| \ll H^{1/(k-2)+\varepsilon}.
\end{equation}

Next, let us recall the result from \cite{Dietmann-Marmon} that we will use. Let $N(X,Y,Z)$ be the number of quadruples $(a,b,x,y)\in\NN^4$ satisfying
\begin{equation}
\label{eq:dioph}
\begin{gathered}
ax^k-by^k=1, \\
X< x \leq 2X, \quad Y< y\leq 2Y \quad \text{and} \quad Z< by^k\leq 2Z.
\end{gathered}
\end{equation}
The following proposition summarizes the main technical result in \cite{Dietmann-Marmon}. Its proof relies on a recent version of the approximate
determinant
method by Heath-Brown (see \cite{Heath-Brown12}).
\begin{prop}
\label{prop:determinant}
Suppose that $X \leq Y \ll Z^{1/k} \ll XY$. Let $M$ be a natural number satisfying
\begin{equation}
\label{eq:logM}
\log Z \geq \log M \geq \max\left\{ \frac92(1 + \delta)\frac{\log(ZX^{-k}) \log Y}{\log Z}, \log Y \right\}
\end{equation}
for a given $\delta >0$. Then we have the estimate
\begin{equation}
\label{eq:determinant}
N(X,Y,Z) \ll_{\delta,\epsi} Z^{\epsi} (XM^{1/2} + Y). 
\end{equation}
If instead $X \geq Y$, then the same holds with the roles of $X$ and $Y$ interchanged in \eqref{eq:logM} and \eqref{eq:determinant}.
\end{prop}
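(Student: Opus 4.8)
The plan is to read off $N(X,Y,Z)$ as a count of integral points on a surface and then to run Heath-Brown's approximate determinant method \cite{Heath-Brown12}, as is carried out in \cite{Dietmann-Marmon}. Since the equation $ax^k-by^k=1$ in \eqref{eq:dioph} is linear in $(a,b)$, I would first eliminate $a$: a quadruple counted by $N(X,Y,Z)$ is determined by the triple $(x,y,b)$ subject to $x^k \mid 1+by^k$, with $a=(1+by^k)/x^k$. For a fixed (necessarily coprime) pair $(x,y)$ the congruence $by^k\equiv -1 \pmod{x^k}$ pins $b$ to one residue class modulo $x^k$, while $Z<by^k\le 2Z$ confines it to an interval of length $\ll Z/Y^k \ll X^k$, the last step using $Z^{1/k}\ll XY$. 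Hence each admissible pair contributes $O(1)$ values of $b$ (and $Y \ll Z^{1/k}$ guarantees $b \asymp ZY^{-k}\gg 1$, so the box is nondegenerate), and the problem reduces to counting the integral points of the surface $V\colon ax^k-by^k=1$ inside the box $B$ given by $x\sim X$, $y\sim Y$, $a\sim ZX^{-k}$, $b\sim ZY^{-k}$. The trivial bound here is $\ll XY$, and everything that follows is aimed at beating it.

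Next I would apply the determinant method to $V(\ZZ)\cap B$. Fixing a family of monomials of bounded weighted degree and evaluating them at the solution points produces an integer matrix whose maximal minors, after the points are grouped by their reduction modulo a suitably chosen prime (or prime power), are divisible by a large power of that prime, while the anisotropic box $B$ controls their archimedean size. Once the number of monomials is large enough compared with this size, the minors are forced to vanish, yielding a nonzero auxiliary polynomial vanishing on all points of a given class and hence, after eliminating the remaining variables, a plane curve of degree $D\asymp M^{1/2}$ in $(x,y)$ through all projected points of that class. The parameter $M$ measures the number of monomials, and the inequality \eqref{eq:logM} is precisely the quantitative condition ensuring that the $p$-adic valuation of every minor exceeds the logarithm of its size; the explicit constant $\tfrac92$ there emerges from the weighted-degree and volume computation attached to $V$, and the outer bound $\log M\le\log Z$ keeps the construction within the available range of primes.

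Finally I would count the projected solutions on these auxiliary curves. Using $X\le Y$, a curve of degree $D\asymp M^{1/2}$ meets $B$ in $\ll XD\ll XM^{1/2}$ points away from the fibres $x=\mathrm{const}$ (at most $D$ values of $y$ for each of the $\ll X$ admissible $x$), while solutions sharing a single value of $x$ are estimated trivially by $\ll Y$; absorbing the summations over primes and residue classes into the factor $Z^\epsi$ then gives the bound \eqref{eq:determinant}. The case $X\ge Y$ is identical after interchanging $x$ and $y$, which is why \eqref{eq:logM} and \eqref{eq:determinant} are stated symmetrically.

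The heart of the argument, and the step I expect to be the main obstacle, is the construction of the auxiliary polynomial with the sharp dependence on $M$: one must track simultaneously the $p$-adic divisibility of the monomial determinant coming from the congruence $by^k\equiv -1\pmod{x^k}$ and its archimedean size in terms of the thin box $B$, and then certify that the resulting polynomial is genuinely nonzero and of degree $\asymp M^{1/2}$. This is exactly the input supplied by \cite{Heath-Brown12} and adapted in \cite{Dietmann-Marmon}, and it is where the constant $\tfrac92$ is pinned down. A secondary but real difficulty is separating the generic contribution $XM^{1/2}$ from the degenerate loci (the fibres $x=\mathrm{const}$) responsible for the additive term $Y$, which is what makes the hypothesis $X\le Y$, and the symmetric reformulation for $X\ge Y$, necessary.
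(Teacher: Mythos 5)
First, a remark on the target of comparison: the paper never proves Proposition \ref{prop:determinant}. It is imported wholesale as ``the main technical result in \cite{Dietmann-Marmon}'', with the single remark that its proof relies on Heath-Brown's approximate determinant method \cite{Heath-Brown12}. Your proposal, in the end, does the same thing: the entire quantitative content of the statement --- why the constant $\tfrac92$ appears, why the threshold in \eqref{eq:logM} has the shape $\log(ZX^{-k})\log Y/\log Z$, and why the conclusion is $XM^{1/2}+Y$ --- is delegated to \cite{Heath-Brown12} and \cite{Dietmann-Marmon}, as you yourself acknowledge. Your preliminary reductions are correct and do match the setup: $\gcd(x,y)=1$, for each pair $(x,y)$ the congruence $by^k\equiv -1 \pmod{x^k}$ together with $Z/Y^k\ll X^k$ leaves $O(1)$ choices of $b$, and the trivial bound is $XY$. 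But since the hard part is exactly what is black-boxed, the attempt cannot count as a proof of the proposition.

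More seriously, the two places where you do commit to a mechanism would not deliver the stated result. (i) The divisibility you propose to exploit --- residues modulo auxiliary primes, or the congruence $by^k\equiv-1\pmod{x^k}$ --- is the classical $p$-adic determinant method, and it cannot produce a condition of the shape \eqref{eq:logM}: a modulus of size $x^k$ gives a vanishing criterion with $k\log X$ in the denominator, and since $Z\asymp (ZX^{-k})X^k\gg X^k$, i.e.\ $k\log X\le \log Z+O(1)$, that criterion is strictly more demanding than \eqref{eq:logM} whenever $a$ is large. The $\log Z$ in the denominator is the signature of the \emph{approximate} (archimedean) method: a solution satisfies $ax^k=by^k+1$ with both sides $\asymp Z$, hence lies within relative error $1/Z$ of the cone $ax^k=by^k$, and each linear relation gained this way contributes a factor $1/Z$ to the determinant estimate. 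Moreover, in the $p$-adic method the number of residue classes modulo $p$ is a main factor in the final count; it cannot be ``absorbed into $Z^{\epsi}$''. (ii) Your endgame is quantitatively wrong: a plane curve of degree $D\asymp M^{1/2}$ may contain up to $D$ vertical lines $x=\mathrm{const}$, and each such fibre can carry $\ll Y$ solutions of the original equation, so your argument yields $\ll XM^{1/2}+M^{1/2}Y$, not $XM^{1/2}+Y$. Since \eqref{eq:logM} forces $M\ge Y$, the loss is at least a factor $Y^{1/2}$; running the optimization of section \ref{sec:Thue2} with this weaker bound gives $N_{3,\operatorname{glob}}(H)\ll H^{79/43+\epsi}$ in place of $H^{47/27+\epsi}$, so the proposition and Lemma \ref{sonnig} as stated would be lost (though the qualitative Theorem \ref{punta_arenas} would survive). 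Removing the $M^{1/2}Y$ term needs a further idea --- for instance an argument that the auxiliary form has no vertical-line factors, or a separate global treatment of the fibres $x=x_0$ containing more than $D$ solutions --- and nothing in your sketch supplies it.
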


Proposition \ref{prop:determinant} is valid for all $k \geq 3$, but we shall in fact only need it for $k=3$. Indeed, if $k \geq 4$, then by \eqref{eq:abc2} we immediately have
\begin{align*}
N_{k, \operatorname{glob}}(H) &\leq \#\{ a,b,x,y \in \ZZ: 0< |a|, |b| \le H, |x|,|y| \ll H^{1/2 + \epsi}, ax^k+by^k = 1\} \\
& = \sum_{\substack{|x|,|y| \ll H^{1/2 + \epsi}\\ (x,y) = 1}} \#\{a,b \in \ZZ: 0< |a|, |b| \le H, ax^k+by^k = 1\} \\
& \ll \sum_{|x|,|y| \ll H^{1/2 + \epsi}} \left(1+ \frac{H}{\max\{|x|^k,|y|^k\}}\right) \ll H^{1 + \epsi},
\end{align*}
as asserted in Lemma \ref{sonnig}.

In view of Proposition \ref{prop:determinant}, it will now be more convenient to study the quantity
\begin{align*}
N^+_{k, \operatorname{glob}}(H)  = \# & \{a, b \in \NN: a,b \le H \text{ and }
  ax^k-by^k=1\\
  & \text{has a solution $(x, y) \in \NN^2$}\}. 
\end{align*}
We certainly have $N_{k, \operatorname{glob}}(H) \ll N^+_{k, \operatorname{glob}}(H)$.
From now on, we let $k=3$. Again, by \eqref{eq:abc2} we have
\begin{align*}
N^+_{3, \operatorname{glob}}(H) &\leq \#\{ a,b,x,y \in \NN: a, b \le H,\ x,y \ll H^{1 + \epsi},\ ax^3-by^3 = 1\} \\
& = \sum_{x,y \ll H^{1 + \epsi}} \#\{a,b \in \NN: a, b \le H,\ ax^3-by^3 = 1\}. 
\end{align*}
For a parameter $Q$ to be specified at a later stage, we shall estimate separately the contributions to $N^+_{3, \operatorname{glob}}(H)$ from terms with $xy \leq Q$ and terms with $xy > Q$, respectively. The contribution from the first range is
\begin{equation}
\label{eq:first_range}
\ll \sum_{xy \leq Q} \left(1+ \frac{H}{\max\{x^3,y^3\}}\right) \ll Q \log Q + H.
\end{equation}

For the remaining range, we shall use Proposition \ref{prop:determinant}. Indeed, partitioning the ranges for $x$, $y$ and $by^k$ into dyadic intervals, we obtain
\[                      
\sum_{x,y \ll H^{1 + \epsi}} \#\{a,b \in \NN: a, b \le H,\ ax^3-by^3 = 1\} \ll H^\epsi \max_{X,Y,Z} N(X,Y,Z),
\]                       
where the maximum is taken over $X,Y,Z$ satisfying the conditions
\[
Z \ll H^{4+\epsi}, \quad  X, Y \ll H^{1+\epsi}, \quad  XY \gg Q.
\]
Thus, let $X,Y,Z$ as above be fixed such that $N(X,Y,Z)$ is maximal. Without loss of generality, we may assume that $X \leq Y \ll Z^{1/3}$, so if we require that $Q \gg H^{4/3 + \epsi}$, then Proposition \ref{prop:determinant} is applicable. Let us write $Z = H^\tau$, where $\tau \leq 4 + \epsi$. If $Q = H^\gamma$, then we may write $X \approx Z^\alpha$ and $Y \approx Z^\beta$, where
\begin{equation}
\label{eq:restrictions}
\alpha \leq \beta \leq \min\left\{1/3,1/\tau\right\}, \quad \alpha + \beta \geq \gamma/\tau. 
\end{equation}
In view of \eqref{eq:logM}, we choose $\delta$, depending on $\epsi$, such that $$\frac{9}{2}\delta(1-3\alpha)\beta \leq \epsi,$$ and we take $M \in \NN$ to satisfy 
\begin{equation}
\label{eq:chooseM}
\max\left\{Z^{\frac{9}{2} (1+\delta) (1-3\alpha)\beta},Z^{\beta}\right\} \leq M \ll \max\left\{Z^{\frac{9}{2}(1+\delta)(1-3\alpha)\beta}, Z^{\beta}\right\}.
\end{equation}
Provided that $M \leq Z$, the estimate \eqref{eq:determinant} then yields
\begin{align*}
N(X,Y,Z) &\ll Z^\epsi \left( Z^{\alpha + \frac{9}{4}(1-3\alpha)\beta} + Z^{\alpha + \frac12 \beta} + Z^\beta\right) \\
&\ll H^\epsi \left( H^{u + \frac{9}{4}\left(1-\frac{3}{4}u\right)v} + H^{u + \frac12 v} + H^v\right), 
\end{align*}
where we have put $u = \tau\alpha$ and $v = \tau\beta$ and used that $\tau \leq 4+\epsi$. For $u$ and $v$, we have the restrictions
\begin{equation}
\label{eq:region}
u \leq v \leq 1, \quad u+v \geq \gamma.  
\end{equation}

The two terms $H^{u + \frac12 v}$ and $H^v$ now obviously give negligible contributions to $N^+_{3, \operatorname{glob}}(H)$. Moreover, for $u,v$ satisfying the inequalities \eqref{eq:region}, the function
\[
\Psi(u,v) = u + \frac94\left(1-\frac34 u\right)v
\]
appearing in the exponent of the remaining term satisfies
\[
\Psi(u,v) \leq \Psi(u,1) =  \frac{9}{4} - \frac{11}{16}u \leq \frac{9}{4} - \frac{11}{16}(\gamma - 1) = \frac{47 - 11\gamma}{16}.
\]
In view of the estimate \eqref{eq:first_range}, we optimize by equating the rightmost expression to $\gamma$, taking $\gamma = 47/27$. To establish the estimate $N_{3, \operatorname{glob}}(H) \ll H^{47/27+\epsi}$, it remains only to justify the assumption $M \leq Z$. To this end, we analyze the quantity
\[
\Phi(\alpha,\beta) =  \frac{9}{2}(1-3\alpha)\beta
\]
appearing in \eqref{eq:chooseM}. Note that the assumptions \eqref{eq:restrictions} imply $\alpha \geq (\gamma-1)/\tau = 20/27\tau$, so that
\[                                                                                                                                                                                                                                                                                                                                                                                        
\Phi(\alpha,\beta) \leq \frac{9}{2\tau}(1-3 \alpha) \leq \frac{9}{2\tau}\left(1-\frac{20}{9\tau}\right) = g(1/\tau),                                                                                                                                                                                                                                                                                                                                                                                      \]
say, where $g(t) = \frac92t(1-\frac{20}{9}t)$. As the quadratic function $g$ is decreasing for $t \geq 9/40$, we have
\[
\Phi(\alpha,\beta) \leq g(1/(4+\epsi)) \leq g(9/40) = 81/160 < 1,
\]
so we may certainly ensure that $M \leq Z$ be choosing $\delta$ small enough. This finishes the proof of Lemma \ref{sonnig}.

\section{The density of soluble Fermat equations}
As in section \ref{sec:Thue2}, to prove Theorem \ref{thm:bonn},
it remains to establish an upper bound for $M_{k, \operatorname{glob}}(H)$ as given
in the following result.
\begin{lemma}
\label{tesco}
Let $k \ge 6$, and assume the truth of the $abc$-conjecture. Then
\[
  M_{k, \operatorname{glob}}(H) \ll H^{2+\varepsilon}.
\]
\end{lemma}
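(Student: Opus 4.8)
The plan is to mirror the treatment of $N_{k,\operatorname{glob}}$ in Lemma \ref{sonnig}, the one genuinely new feature being that the constant term $1$ is replaced by a third power $cz^k$, so that the three summands $ax^k$, $by^k$, $cz^k$ are no longer automatically coprime. Accordingly I would first dispose of the \emph{degenerate} solutions, in which one of $x,y,z$ vanishes. If, say, $z=0$, then $ax^k=-by^k$ with $\gcd(x,y)=1$ forces $(a,b)=(sy^k,-sx^k)$ for some $s\in\ZZ$, and the conditions $|a|,|b|\le H$ leave
\[
  \ll\sum_{s\le H}\left(\frac{H}{s}\right)^{2/k}\ll H
\]
admissible pairs $(a,b)$; since $c$ is then free, such triples number $\ll H^2$, and the cases $x=0$ or $y=0$ are identical. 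It therefore suffices to bound the number of triples admitting a \emph{primitive} solution $(x,y,z)$ with $xyz\neq0$ and $\gcd(x,y,z)=1$.

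For such a solution I would apply the $abc$-conjecture to $ax^k+by^k+cz^k=0$ after dividing through by $g=\gcd(ax^k,by^k,cz^k)$, the point being that the reduced summands are then pairwise coprime, so that the radical of their product is at most $\operatorname{rad}(abc)\cdot xyz\le|abc|\,xyz$. This yields
\[
  \max\{|ax^k|,|by^k|,|cz^k|\}\ll (|abc|\,xyz)^{1+\varepsilon}\ll (H^3\,xyz)^{1+\varepsilon},
\]
and writing $V=\max\{|x|,|y|,|z|\}$ and using $|c|z^k\ge z^k$ one already gets $V\ll H^{3/(k-3)+\varepsilon}$, the analogue of \eqref{eq:abc2}. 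Exactly as in the Thue case I would then estimate
\[
  M_{k,\operatorname{glob}}(H)\ll\sum_{\substack{(x,y,z)\ \text{primitive}\\ |x|,|y|,|z|\ll V}}\#\{(a,b,c)\in[-H,H]^3: ax^k+by^k+cz^k=0\},
\]
the inner count being over the integer points, inside the box, of the rank-two lattice $\Lambda_{x,y,z}=\{(a,b,c): ax^k+by^k+cz^k=0\}$. By the geometry of numbers this count is $\ll H^2/\operatorname{covol}(\Lambda_{x,y,z})$ plus lower-order terms, and since $\operatorname{covol}(\Lambda_{x,y,z})\asymp V^k$ for a primitive $(x,y,z)$, the main term contributes $\ll H^2\sum_{V\ge1}V^{2}V^{-k}\ll H^2$, the series converging already for $k\ge4$.

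The delicate part—and the reason the hypothesis is $k\ge6$ rather than $k\ge4$—is the lower-order contribution, namely the \emph{thin} planes for which $V^k>H^2$, where $H^2/\operatorname{covol}(\Lambda_{x,y,z})$ is below $1$ and a plane is counted only when $\Lambda_{x,y,z}$ carries a short vector inside the box. The short vectors with a vanishing coordinate correspond precisely to the degenerate solutions already discarded, so what survives are genuine three-term short vectors, and the extreme case is the \emph{balanced} one $|x|\asymp|y|\asymp|z|$. Here I would sharpen the size estimate: ordering the summands so that $|ax^k|\le|by^k|\le|cz^k|$, the largest two become comparable, and combining this with the $abc$-inequality gives $(xyz)^{k-1}\ll(|c|z^k)^{2+\varepsilon}\ll(HV^k)^{2+\varepsilon}$, whence in the balanced range $V\ll H^{2/(k-3)+\varepsilon}$. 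The number of balanced primitive triples that can occur is then $\ll H^{6/(k-3)+\varepsilon}$, which is $\ll H^{2+\varepsilon}$ exactly when $k\ge6$.

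I expect the hardest step to make rigorous to be the bookkeeping for the thin planes away from the balanced range: one must show that when the two least variables are small, their smallness compensates for the larger admissible range of $V$, so that no term beats the balanced contribution. The natural route is to split the $(x,y,z)$ into dyadic boxes according to the individual sizes of $x$, $y$, $z$, feed the sharpened size bound into the lattice-point count box by box, and sum—precisely the three-variable analogue of the summation $\sum_{|x|,|y|\ll H^{1/2+\varepsilon}}\bigl(1+H/\max\{|x|^k,|y|^k\}\bigr)$ carried out for $N_{k,\operatorname{glob}}$, with the convergence of the resulting multiple sum being what pins the threshold at $k\ge6$.
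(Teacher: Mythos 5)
You follow the same skeleton as the paper --- use the $abc$-conjecture to bound the size of a primitive solution $(x,y,z)$, then count coefficient vectors $(a,b,c)$ as points of the rank-two lattice $\Lambda_{x,y,z}$ --- and you correctly locate where the hypothesis $k\ge 6$ enters (the ``thin plane'', i.e.\ the $+1$, term). However, the quantitative core of your argument has two genuine gaps. First, your $abc$ step loses a factor: applying the conjecture to the equation divided by $g=\gcd(ax^k,by^k,cz^k)$ bounds $\max\{|ax^k|,|by^k|,|cz^k|\}/g$, not the maximum itself, so your displayed inequality $\max\{|ax^k|,|by^k|,|cz^k|\}\ll(|abc|\,xyz)^{1+\epsi}$ carries an uncontrolled factor $g$, which can be a power of $H$. (Pairwise coprimality of the reduced summands is true, but it is not the issue.) The paper removes this factor by observing that $(x,y,z)=1$ forces $g\mid abc$, and by proving the sharper radical bound $\operatorname{rad}\le|abcxyz|/g$, so that the two occurrences of $g$ cancel. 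Second, and fatally for the route as you describe it: even granting that step, your general height bound is only $V\ll H^{3/(k-3)+\epsi}$, you obtain the exponent $2/(k-3)$ only in the balanced range, and the ``bookkeeping'' you defer for the unbalanced thin planes cannot be closed from the inequalities you have. Indeed, at $k=6$ your available constraints in a dyadic box $X\le Y\le Z$ are $Z\ll H^{1+\epsi}$, $(XYZ)^{k-1}\ll(HZ^k)^{2+\epsi}$ and $XY\le Z^2$; these admit $Z\asymp H$ and $XY\asymp H^{9/5}$, and then the thin-plane contribution may be as large as $XYZ\asymp H^{14/5}$, which exceeds $H^{2+\epsi}$. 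So the proposal does not prove the lemma at the critical degree $k=6$.

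The missing idea is that no balanced/unbalanced dichotomy is needed at all: apply $abc$ and read off the bound for the term containing the largest \emph{variable}. If $|x|=\max\{|x|,|y|,|z|\}$, then $|a||x|^k\ll(|abc|\,|x|^3)^{1+\epsi}$, and the coefficient $|a|$ cancels from both sides: $|x|^{k-3}\ll|bc|^{1+\epsi}|a|^{\epsi}\ll H^{2+\epsi}$, i.e.\ $V\ll H^{2/(k-3)+\epsi}$ in every case. (You instead discarded the coefficient via $|c|z^k\ge z^k$, which is exactly where the exponent degrades from $2/(k-3)$ to $3/(k-3)$.) With this uniform bound the count is immediate: the $+1$ term contributes $\ll V^3\ll H^{6/(k-3)+\epsi}\le H^{2+\epsi}$ for $k\ge6$, and the $H^2/\Delta_{x,y,z}$ term sums dyadically for $k\ge4$, as in the paper. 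Two further remarks. (a) Your lattice count ``$\ll H^2/\operatorname{covol}$ plus lower-order terms'' is not correct for \emph{all} integer points of a rank-two lattice in a box: there is a first-minimum term of size $H/\lambda_1$, and it is not negligible after summation. For instance, for every triple with $x=\pm y$ the lattice contains $(1,\mp1,0)$, so $\lambda_1\ll1$ and your inner count (which as written ranges over all $(a,b,c)\in[-H,H]^3$) is $\gg H$; summing over the $\asymp V^2$ such triples already exceeds $H^2$ at $k=6$. The paper sidesteps this by first reducing to primitive coefficient vectors, via $M_{k,\operatorname{glob}}(H)\le\sum_{\gamma\le H}M_{k,\operatorname{glob},\operatorname{prim}}(H/\gamma)$, and then invoking Lemma 4.5 of \cite{Browning09}, which counts \emph{primitive} lattice points and has no middle term; this reduction is absent from your proposal. (b) On the plus side, your separate treatment of degenerate solutions with $xyz=0$ is correct, and is in fact a point where you are more careful than the paper, whose $abc$ step tacitly assumes $xyz\ne0$.
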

For technical reasons, it is easier first to deal with the quantity
\begin{align*}
  M_{k, \operatorname{glob}, \operatorname{prim}}(H)  =
  \# & \{a, b, c \in \zet: (a,b,c)=1, 0<|a|,|b|, |c| \le H\\
  & \text{ and }
  ax^k+by^k+cz^k=0
  \text{ has a solution $(x, y, z) \in \zet^3 \backslash
  \{\mathbf{0}\}$}\}
\end{align*}
focusing on Fermat equations $ax^k+by^k+cz^k=0$ with primitive
coefficient vector $(a,b,c)$. Under the assumptions of Lemma \ref{tesco},
we will show that
\begin{equation}
\label{26grad}
  M_{k, \operatorname{glob}, \operatorname{prim}}(H) \ll H^{2+\varepsilon}.
\end{equation}
Now the equation $ax^k+by^k+cz^k=0$ has a non-trivial integer solution
$(x,y,z)$ if and only if the equation
$\frac{a}{\gamma}x^k+\frac{b}{\gamma}y^k+\frac{c}{\gamma}z^k=0$
has one, where $\gamma=(a,b,c)$. Therefore, 
it is easy to deduce Lemma \ref{tesco} from \eqref{26grad} via
\[
  M_{k, \operatorname{glob}}(H) \le \sum_{\gamma \le H}
  M_{k, \operatorname{glob}, \operatorname{prim}}(H/\gamma)
  \ll \sum_{\gamma \le H} \left( \frac{H}{\gamma} \right)^{2+\varepsilon}
  \ll H^{2+\varepsilon}.
\]
Thus it remains to prove \eqref{26grad}, so
suppose that $ax^k+by^k+cz^k=0$ has a solution $(x,y,z) \in \zet^3
\backslash\{\mathbf{0}\}$. Since the equation is homogeneous,
we can without loss of generality assume that $(x,y,z)=1$.
Put $u=ax^k$, $v=by^k$, $w=cz^k$, and let $\gamma$ be the greatest
common divisor of $u, v, w$. Now if $p^g \mid \mid \gamma$ for some
prime power $p^g$,
then $p^g$ must divide one of $a, b, c$, because of $(x,y,z)=1$.
Therefore, $\gamma$ divides $abc$, so
\[
  \prod_{p \mid \frac{u}{\gamma} \cdot \frac{v}{\gamma} \cdot
  \frac{w} {\gamma}} p \le
  \prod_{p \mid \frac{abc}{\gamma}} p \cdot
  \prod_{p \mid x^k y^k z^k} p \le
  \frac{|abcxyz|}{\gamma}.
\]
Let us now without loss of generality assume that $|x| \ge |y|$ and
$|x| \ge |z|$. As
$\frac{u}{\gamma}+\frac{v}{\gamma}+\frac{w}{\gamma}=0$,
by the $abc$-conjecture we obtain
\[
  \frac{|u|}{\gamma} \ll
  \left( \frac{|abcxyz|}{\gamma} \right)^{1+\varepsilon},
\]
so
\[
  \max\{|x|, |y|, |z|\} \ll H^{2/(k-3)+\varepsilon}.
\]
Consequently,
\begin{align*}
\label{bonn}
  M_{k, \operatorname{glob}, \operatorname{prim}}(H) \le
  \# & \{a, b, c, x, y, z \in \zet: (a,b,c)=(x,y,z)=1, \nonumber\\
  & |a|,|b|,|c| \le H, |x|, |y|, |z|
  \ll H^{2/(k-3)+\varepsilon}, \nonumber \\
  & \text{ and } ax^k+by^k+cz^k=0\}.
\end{align*}
Now for fixed $x, y, z$ with $(x, y, z)=1$, the
integer solutions $(a,b,c)$ of the equation $ax^k+by^k+cz^k=0$
lie on a two-dimensional lattice $\Gamma$ of determinant
\begin{equation}
\label{nuesse}
  \Delta_{x,y,z} \gg \max\{|x|^k, |y|^k, |z|^k\}
\end{equation}
(see Lemma 4.4 in \cite{Browning09}), and by Lemma 4.5 in
\cite{Browning09}, the number of such primitive solutions $(a,b,c)$
with $|a|, |b|, |c| \le H$ is at most of order of magnitude
\begin{equation}
\label{tim}
  1 + \frac{H^2}{\Delta_{x,y,z}}.
\end{equation}
Hence the contribution to $M_{k, \operatorname{glob}, \operatorname{prim}}(H)$
coming from those $x, y, z$ giving $\Delta_{x,y,z} \ge H^2$ is at most
the order of magnitude $O(H^{6/(k-3)+\varepsilon})$ of all permissible
$(x,y,z)$ stemming from the bound $|x|, |y|, |z| \le
H^{2/(k-3)+\varepsilon}$. Since $k \ge 6$, this is compatible with
\eqref{26grad}. Let us now bound the contribution from smaller
$\Delta_{x, y, z}$. To this end, fix $A \in [1, H^2]$. By \eqref{nuesse},
the number of $x, y, z \in \zet$ such that $A \le \Delta_{x,y,z} \le 2A$
is at most $O(A^{3/k})$, and for such fixed $x, y, z$, by \eqref{tim},
there are at most $O(H^2/A)$ corresponding $(a,b,c)$. The total
contribution from $A \le \Delta \le 2A$ is therefore $O(H^2 A^{3/k-1})$.
A dyadic summation over the range of $A$, keeping in mind that $k \ge 6$,
therefore again
gives the bound $O(H^{2+\varepsilon})$ as claimed in \eqref{26grad}.
This finishes the proof of Lemma \ref{tesco}.

\bibliographystyle{plain}
\bibliography{thue_fermat_bib2}{}

\begin{thebibliography}{10}

\bibitem{Bhargava_1308.0395}
Manjul Bhargava.
\newblock Most hyperelliptic curves over $\mathbb{Q}$ have no rational points.
\newblock {\em arXiv:1308.0395}.

\bibitem{Bhargava_1402.1131}
Manjul Bhargava.
\newblock A positive proportion of plane cubics fail the {H}asse principle.
\newblock {\em arXiv:1402.1131}.

\bibitem{Browning-Dietmann09}
T.~D. Browning and R.~Dietmann.
\newblock Solubility of {F}ermat equations.
\newblock In {\em Quadratic forms---algebra, arithmetic, and geometry}, volume
  493 of {\em Contemp. Math.}, pages 99--106. Amer. Math. Soc., Providence, RI,
  2009.

\bibitem{Browning09}
Timothy~D. Browning.
\newblock {\em Quantitative arithmetic of projective varieties.}
\newblock Progress in Mathematics 277. Basel: Birkh\"auser. xi, 160~p., 2009.

\bibitem{Bruedern_Dietmann_14}
J{\"o}rg Br{\"u}dern and Rainer Dietmann.
\newblock Random {D}iophantine equations, {I}.
\newblock {\em Adv. Math.}, 256:18--45, 2014.

\bibitem{BDLB_14}
R.~de~la Bret{\`e}che and T.~D. Browning.
\newblock Density of {C}h\^{a}telet surfaces failing the {H}asse principle.
\newblock {\em Proc. Lond. Math. Soc. (3)}, 108:1030--1078, 2014.

\bibitem{Dietmann-Marmon}
Rainer Dietmann and Oscar Marmon.
\newblock The density of twins of {$k$}-free numbers.
\newblock {\em Bull. Lond. Math. Soc.}, 46(4):818--826, 2014.

\bibitem{Granville}
Andrew Granville.
\newblock Rational and integral points on quadratic twists of a given
  hyperelliptic curve.
\newblock {\em Int. Math. Res. Not. IMRN}, (8):Art. ID 027, 24, 2007.

\bibitem{Greenberg}
Marvin~J. Greenberg.
\newblock {\em Lectures on forms in many variables}.
\newblock W. A. Benjamin, Inc., New York-Amsterdam, 1969.

\bibitem{Guo}
Charng~Rang Guo.
\newblock On solvability of ternary quadratic forms.
\newblock {\em Proc. London Math. Soc. (3)}, 70(2):241--263, 1995.

\bibitem{Heath-Brown12}
D.R. Heath-Brown.
\newblock {Square-free values of $n^2+1$.}
\newblock {\em Acta Arith.}, 155(1):1--13, 2012.

\bibitem{Hooley93}
C.~Hooley.
\newblock On ternary quadratic forms that represent zero.
\newblock {\em Glasgow Math. J.}, 35(1):13--23, 1993.

\bibitem{Iwaniec-Kowalski}
Henryk Iwaniec and Emmanuel Kowalski.
\newblock {\em Analytic number theory}, volume~53 of {\em American Mathematical
  Society Colloquium Publications}.
\newblock American Mathematical Society, Providence, RI, 2004.

\bibitem{Masser}
D.~W. Masser.
\newblock Open problems.
\newblock In {\em Proceedings of the Symposium on Analytic Number Theory}.
  London, Imperial College, 1985.

\bibitem{Mordell}
L.~J. Mordell.
\newblock The number of solutions of some congruences in two variables.
\newblock {\em Math. Z.}, 37:193--209, 1933.

\bibitem{Poonen_Voloch}
Bjorn Poonen and Jos{\'e}~Felipe Voloch.
\newblock Random {D}iophantine equations.
\newblock In {\em Arithmetic of higher-dimensional algebraic varieties ({P}alo
  {A}lto, {CA}, 2002)}, volume 226 of {\em Progr. Math.}, pages 175--184.
  Birkh\"auser Boston, Boston, MA, 2004.
\newblock With appendices by Jean-Louis Colliot-Th{\'e}l{\`e}ne and Nicholas M.
  Katz.

\bibitem{Selmer}
Ernst~S. Selmer.
\newblock The {D}iophantine equation {$ax^3+by^3+cz^3=0$}.
\newblock {\em Acta Math.}, 85:203--362 (1 plate), 1951.

\bibitem{Serre_GTM}
J.-P. Serre.
\newblock {\em A course in arithmetic}.
\newblock Springer-Verlag, New York-Heidelberg, 1973.
\newblock Translated from the French, Graduate Texts in Mathematics, No. 7.

\end{thebibliography}

\end{document}